\documentclass{amsart} % default font size is 10pt
\usepackage{latexsym}
\usepackage{amsfonts}
\usepackage{amssymb}
\usepackage{amsmath}
\usepackage{amscd}
\usepackage{graphicx}
\usepackage{eucal}
\usepackage[all]{xy}
\newcommand{\bs}{\boldsymbol}
\newcommand{\mb}{\mathbf}
\renewcommand{\dot}{\centerdot}
%\pdfshift

\begin{document}
\newtheorem{thm}{Theorem}[section]
\newtheorem{proposition}[thm]{Proposition}
\newtheorem{lemma}[thm]{Lemma}
\newtheorem{corollary}[thm]{Corollary}

\newtheoremstyle{example}{\topsep}{\topsep}%
     {}%         Body font
     {}%         Indent amount (empty = no indent, \parindent = para indent)
     {\bfseries}% Thm head font
     {.}%        Punctuation after thm head
     {2pt}%     Space after thm head (\newline = linebreak)
     {\thmname{#1}\thmnumber{ #2}\thmnote{ #3}}%         Thm head spec

   \theoremstyle{example}
   \newtheorem{notation}[thm]{Notation}
   \newtheorem{definition}[thm]{Definition}
   \newtheorem{remark}[thm]{Remark}

\title{The monoidal structure of strictification}
\author{Nick Gurski}
\address{School of Mathematics and Statistics, University of Sheffield, Sheffield, UK, S3 7RH}
\email{nick.gurski@sheffield.ac.uk}
\keywords{Gray tensor product, strictification}
\subjclass{18D05, 18D10}

%\date{2009}
\maketitle

\begin{abstract}
We study the monoidal structure of the standard strictification functor $\textrm{st}:\mathbf{Bicat} \rightarrow \mathbf{2Cat}$.  In doing so, we construct monoidal structures on the 2-category whose objects are bicategories and on the 2-category whose objects are 2-categories.
\end{abstract}

%\tableofcontents

\section*{Introduction}

The study of coherence in higher category theory is the study of the relationship between strict structures and their weak variants.  A strict structure is one which imposes many axioms, while a weak structure is one in which axioms are replaced by coherent isomorphisms whenever possible.  The classic example of a coherence theorem is Mac Lane's result \cite{mac} that every monoidal category is equivalent to a strict monoidal category.  Here, the notion of a strict monoidal category includes the requirement that $(x \otimes y) \otimes z = x \otimes (y \otimes z)$, while for a general monoidal category we only insist on there being a natural isomorphism between these two composite tensors (satisfying certain other axioms).  Thus Mac Lane's theorem states that when considering monoidal structures, there is no essential difference between the strict sort and the more general sort having isomorphisms in place of the usual monoid axioms.

Moving more firmly into the realm of higher category theory, the coherence theorem for monoidal categories can be modified to become a coherence theorem for bicategories.  This theorem (see \cite{mp}) states that every bicategory is biequivalent to a strict 2-category.  Once again, this particular coherence theorem shows how every ``weakly defined'' structure of a particular sort is essentially the same as a ``strictly defined'' version of the same sort of structure.  Additionally, these theorems can be extended to functors as well, proving that every weak functor between bicategories can be replaced, up to biequivalence, by a strict 2-functor between 2-categories.

In fact, this kind of pattern has been studied intensely by 2-category theorists.  One begins with a 2-monad $T$ on a 2-category $K$; this is the appropriate notion of a kind of algebraic structure (given by $T$) on objects of the 2-category $K$.  Then there are different notions of algebra for $T$, using the 2-dimensional nature of $K$ in different ways.  Thus one can study lax $T$-algebras, pseudo-$T$-algebras, and strict $T$-algebras, as well as various kinds of algebra maps between them.  Within this context, there are two kinds of coherence theorems.  First, one might show that the inclusion of stricter algebras or maps into weaker ones has a left adjoint.  Such a theorem shows that there is a classifier for weak maps, and can be interpretted as a kind of coherence theorem for morphisms.  Second, one might show that the adjunction has a unit with components which are equivalences, giving a method for turning weak algebras into strict ones up to equivalence.  This abstract setting captures the most important aspects of the example of monoidal categories.  The reader interested in pursuing this further should read \cite{bkp, la2}.

Moving up to the study of various kinds of 3-categories, the situation becomes much more interesting.  The coherence theorem for tricategories states that every tricategory is triequivalent to a \textbf{Gray}-category, but that there are tricategories which are not triequivalent to strict 3-categories.  Strict 3-categories are exactly what the name implies, a fully strict version of a three-dimensional category with associativity and unit axioms imposed directly at each dimension.  A tricategory is, in contrast, a fully weak version of a 3-category, introduced first by Gordon, Power, and Street in \cite{gps} and then modified slightly by the author in \cite{gthesis}.  A \textbf{Gray}-category is an intermediate notion, and is sometimes referred to as a semi-strict notion of 3-category because while it retains many of the axioms that hold in a 3-category it does have one notable exception:  the interchange law for 2-cells only holds up to a coherence isomorphism, which then itself satisfies some axioms reminiscent of the braid relations.  \textbf{Gray}-categories can be defined using enriched category theory through the introduction of the Gray tensor product, and it is the relationship between strictification of bicategories and the Gray tensor product that is the focus of this paper.

The particular interest in showing that strictification is monoidal arises from the proof of coherence for tricategories.  One version of coherence for bicategories can be proved in a single step by constructing the Yoneda embedding $B \hookrightarrow [B^{op}, \mb{Cat}]$.  By general results, the target bicategory for this functor is actually a 2-category since $\mb{Cat}$ is a 2-category, so the Yoneda lemma for bicategories embeds a bicategory within a strict 2-category; taking the essential image shows that every bicategory is biequivalent to a strict 2-category.  The same strategy will not work without modification in dimension three.  Here, we would get a Yoneda embedding
$T \hookrightarrow [T^{op}, \mb{Bicat}]$, but since $\mb{Bicat}$ is not very strict as a tricategory, the target of this embedding will also not be as strict as desired.  Instead, one replaces a general tricategory $T$ with what is called a cubical tricategory $\overline{T}$, and then it is possible to embed any cubical tricategory in a \textbf{Gray}-category.  The original construction of this embedding in \cite{gps} uses the \textbf{Gray}-category of pre-representations, while the author's proof in \cite{gthesis, gbook} once again uses a Yoneda lemma for cubical tricategories.

For the purposes of this paper, the crucial step in this proof is the construction of a cubical tricategory $\overline{T}$ which is triequivalent to the original tricategory $T$.  Now a cubical tricategory is one in which the hom-bicategories are actually 2-categories, and in which the functors giving composition and units are cubical.  In order to strictify the hom-bicategories, one applies the strictification functor $\textrm{st}:\mb{Bicat} \rightarrow \mb{2Cat}$.  This strictification functor, constructed using the coherence theorems for bicategories and functors, has the property that it comes with a comparison map
\[
\textrm{st}X \times \textrm{st}Y \rightarrow \textrm{st}(X \times Y).
\]
The insight in \cite{gps} is that this comparison functor is cubical, and so the cubical tricategory $\overline{T}$ can be constructed by applying st to the hom-bicategories of $T$ and then using this comparison functor to define composition.

Studying the monoidal structure on this functor was the impetus for this paper.  There are errors in both \cite{gps} and \cite{gthesis} claiming that the comparison functor given above equips the functor st with a lax monoidal structure.  This is in fact false, as the maps above are not natural in the usual sense.  They are natural up to an invertible icon, which is a kind of 2-cell that shows up in various constructions involving bicategories and 2-categories (see \cite{la3, lp}).  Thus the goal of this paper is to construct relevant monoidal structures on the 2-categories of bicategories, functors, and icons on the one hand and 2-categories, 2-functors, and icons on the other hand such that strictification has a monoidal structure.  This is achieved in Theorem \ref{stmonoidal}.

One of the major goals of ``low-dimensional'' category theory is a good monoidal structure on the category of \textbf{Gray}-categories, as an example see \cite{crans} for an attempt to construct such a monoidal structure directly.  The results in this paper suggest that an alternate approach to this problem consists of studying the monoidal structure on the strictification process for tricategories.  This will necessarily have to take place in a higher dimensional context as strictification for tricategories will not be a monoidal functor between monoidal \textit{categories} just as the functor st is not, but it will also likely not be a monoidal functor between monoidal \textit{tetracategories}, once again just as st is monoidal on the level of 2-categories.  The results in this paper suggest that one avenue of approach would be to start with the low-dimensional structures constructed in \cite{gg}, and then to study the monoidal structure on the strictification $T \mapsto \textrm{Gr}T$ of \cite{gthesis, gbook}.

This paper will proceed as follows.  The first section contains a review of the Gray tensor product of 2-categories, but omits most of the proofs.  The interested reader should consult the original work of Gray \cite{gray}, the monograph of Gordon, Power, Street \cite{gps}, or the author's forthcoming book \cite{gbook}.  The second section reviews some basic notions from the theory of monoidal bicategories, as well as giving some terminology for a stricter variant (strongly monoidal 2-categories) that will appear naturally.  The third section begins with the definition of an icon, and then goes on to prove that icons appear naturally as the 2-cells in both a strongly monoidal 2-category of bicategories and a strongly monoidal 2-category of 2-categories.  The fourth section studies the structure of strictification as a 2-functor, and the final section is devoted to providing this functor with a monoidal structure.  This can be done in a straightforward, calculational fashion, but we have chosen to prove that strictification is a monoidal functor on the level of 2-categories using the theory of doctrinal adjunction \cite{k}.

The author would like to thank Steve Lack, John Power, and Ross Street for conversations and correspondence which have contributed to the research in this paper, as well as the referees for comments which have helped shape the final form of this paper.

\section{The Gray tensor product}

The Gray tensor product  gives a monoidal structure on the category of 2-categories and 2-functors which differs from the usual Cartesian structure.  Here we give a quick overview of the generators-and-relations definition of the Gray tensor product of a pair of 2-categories, $A \otimes B$; we do not give details, and refer the reader to \cite{gps, gray, gbook} for more information.

\begin{notation}  We will denote horizontal composition of any kind in a 2-category by $*$ when such a symbol is necessary, and vertical composition of 2-cells by concatenation.
\end{notation}

Let $A,B$ be 2-categories.  Then $A \otimes B$ is the 2-category with
\begin{itemize}
\item objects $a \otimes b$ where $a \in A, b \in B$;
\item 1-cells generated by
\[
\begin{array}{rl}
f \otimes 1: a \otimes b \rightarrow a' \otimes b, & f:a \rightarrow a' \textrm{ in } A, b \in B, \\
1 \otimes g: a \otimes b \rightarrow a \otimes b', & a \in A, g:b \rightarrow b' \textrm{ in }B,
\end{array}
\]
subject to the relations
\[
\begin{array}{rcl}
(f \otimes 1)*(f' \otimes 1) & = & (f*f') \otimes 1, \\
(1 \otimes g)*(1 \otimes g') & = & 1 \otimes (g*g'),
\end{array}
\]
where the composite on the lefthand side is taken in $A \otimes B$ while the composite on the righthand side is in $A$ or $B$, respectively;
\item 2-cells generated by
\[
\begin{array}{c}
\alpha \otimes 1:f \otimes 1 \Rightarrow f' \otimes 1, \\
1 \otimes \beta:1 \otimes g \Rightarrow 1 \otimes g', \\
\Sigma_{f,g}:(f \otimes 1)*(1 \otimes g) \Rightarrow (1 \otimes g)*(f \otimes 1),
\end{array}
\]
where $\alpha$ is a 2-cell in $A$ and $\beta$ is a 2-cell in $B$.
\end{itemize}
These 2-cells are subject to further relations.  Some of these relations require that the function $A \rightarrow A \otimes B$ sending $\delta$ to $\delta \otimes b$ is a 2-functor for every $b \in B$, and analogously if the $A$-variable is fixed.  There are also naturality axioms for the 2-cells $\Sigma_{f,g}$, together with some braid-like axioms governing composites of different $\Sigma$'s.

The Gray tensor product serves a variety of functions, one of those functions being to classify cubical functors as defined below.

\begin{definition}
A functor $F: A_{1} \times A_{2} \times \cdots A_{n} \rightarrow B$ is
\textit{cubical} if the following condition holds:\\
if $(f_{1}, f_{2}, \ldots, f_{n})*(g_{1}, g_{2}, \ldots, g_{n})$ is a
composable
pair of morphisms in the 2-category $A_{1} \times A_{2} \times \cdots A_{n}$ such that for
all
$i > j$, either $g_{i}$ or $f_{j}$ is an identity map, then the comparison
2-cell
\[
\phi: F(f_{1}, f_{2}, \ldots, f_{n})*F(g_{1}, g_{2}, \ldots,
g_{n}) \Rightarrow F \Big( (f_{1}, f_{2}, \ldots, f_{n})*(g_{1}, g_{2}, \ldots, g_{n})
\Big)
\]
is an identity.
\end{definition}

\begin{remark}
This definition is equivalent to the condition that, for any $1 \leq j \leq n$, the comparison 2-cell
\[
 (1, 1, \ldots, 1, f_{j}, f_{j+1}, \ldots, f_{n})*F(g_{1}, \ldots, g_{j}, 1, \ldots, 1) \Rightarrow F(g_{1}, \ldots, g_{j-1}, f_{j}g_{j}, f_{j+1}, \ldots, f_{n})
\]
is the identity 2-cell.
\end{remark}

\begin{remark}
1.  Every cubical functor strictly preserves identity 1-cells. \\
2.  A cubical functor of one variable is necessarily a 2-functor.
\end{remark}

The relationship between the Gray tensor product and cubical functors is described by the theorem below.  Before stating it, we require a definition.

\begin{definition}
Let $A_{1}, \ldots, A_{n}, B$ be $2$-categories.  Then $\mathbf{2Cat}_{c}(A_{1}, \ldots, A_{n}; B)$ is the set of cubical functors $A_{1} \times \cdots \times A_{n} \rightarrow B$.
\end{definition}

\begin{thm}\label{cubicalmulticat}
Let $A$, $B$, and $C$ be 2-categories.  There is a cubical functor
\[
c:A \times B \rightarrow A \otimes B,
\]
natural in $A$ and $B$, such that composition with $c$ induces an isomorphism
\[
\mathbf{2Cat}_{c}(A, B; C) \cong \mathbf{2Cat}(A \otimes B, C).
\]
\end{thm}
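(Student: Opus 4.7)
The plan is to use the generators-and-relations presentation of $A \otimes B$ as an implicit universal property: construct the comparison $c$ directly on generators, then exhibit an explicit inverse to pre-composition with $c$.

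First I define $c:A \times B \to A \otimes B$. On objects, $c(a,b) = a \otimes b$; on 1-cells, $c(f, 1_b) = f \otimes 1$, $c(1_a, g) = 1 \otimes g$, and for a general morphism $(f, g) = (1, g) * (f, 1)$ set $c(f, g) = (1 \otimes g) * (f \otimes 1)$; on 2-cells, $c(\alpha, \beta) = (1 \otimes \beta) * (\alpha \otimes 1)$. The unitor is the identity because $1 \otimes 1_b = 1_{a \otimes b} = 1_a \otimes 1$ holds by the single-variable 2-functor relations. The compositor at a composable pair $\bigl((f_1,g_1),(f_2,g_2)\bigr)$ is built from $\Sigma_{f_1, g_2}$, and a direct computation shows it equals the identity precisely when $g_2$ or $f_1$ is an identity --- which is the cubical condition, in its equivalent single-step reformulation. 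That $c$ is a pseudofunctor is forced by the Gray relations: the 2-functor relations in each variable, the naturality of $\Sigma$, and the braid-like relations each match a coherence axiom for $c$. Naturality in $A$ and $B$ is immediate from the description on generators.

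To obtain the isomorphism, note that for any 2-functor $F:A \otimes B \to C$ the composite $Fc$ is cubical, since $F$ preserves the compositors of $c$ and in particular the identity ones. For the inverse, given a cubical $G: A \times B \to C$, define $\tilde G: A \otimes B \to C$ on generators by $\tilde G(a \otimes b) = G(a,b)$, $\tilde G(f \otimes 1) = G(f, 1)$, $\tilde G(1 \otimes g) = G(1, g)$, $\tilde G(\alpha \otimes 1) = G(\alpha, 1)$, $\tilde G(1 \otimes \beta) = G(1, \beta)$, and let $\tilde G(\Sigma_{f, g})$ be the compositor of $G$ at $\bigl((f, 1), (1, g)\bigr)$, viewed as a 2-cell $G(f,1) * G(1,g) \Rightarrow G(f,g) = G(1, g) * G(f, 1)$; the last equality is strict because the pair $\bigl((1,g), (f, 1)\bigr)$ satisfies the cubical condition. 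Mutual inverseness of $F \mapsto Fc$ and $G \mapsto \tilde G$ then reduces to comparison on generators.

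The main obstacle is verifying that $\tilde G$ respects all the Gray relations. The 2-functor-in-each-variable relations come directly from the single-variable functoriality of $G$. The naturality relations for $\Sigma_{f,g}$ reduce to the naturality of the compositor of $G$ in $f$ and $g$. The substantial work lies in the braid-like axioms for pairs and triples of $\Sigma$'s: each unwinds into an equation between two pastings of compositors of $G$ on triple composites, and it follows from the pseudofunctor coherence axiom for $G$ together with the cubical condition, which collapses enough intermediate compositors to identities to match the two sides. This case-by-case check is precisely why the Gray relations take the form they do, and it occupies the bulk of the proof.
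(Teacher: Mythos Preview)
Your proposal is correct and follows essentially the same approach as the paper: the paper's proof is only a sketch (define $c$ on generators, force it to be cubical, and observe that the universal property then yields the bijection), and your write-up simply expands on exactly these points, including the construction of the inverse $G \mapsto \tilde G$ via the generators-and-relations description and the identification of $\tilde G(\Sigma_{f,g})$ with the nontrivial compositor of $G$.
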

\begin{proof}[Sketch]
The functor $c$ is defined by sending $(a,b)$ to $a \otimes b$, $(f,1)$ to $f \otimes 1$, $(1,g)$ to $1 \otimes g$, and similarly for 2-cells.  We force this functor to be cubical, and then it is easy to check the isomorphism in the theorem.
\end{proof}

We will need the following lemma later.

\begin{lemma}\label{universalboobieq}
The universal cubical functor $c:A \times B \rightarrow A \otimes B$ is a bijective-on-objects biequivalence.
\end{lemma}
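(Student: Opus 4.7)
My plan is to verify directly that $c$ is bijective on objects and that each hom-functor
$$c_{(a,b),(a',b')}: A(a,a')\times B(b,b') \to (A\otimes B)(a\otimes b, a'\otimes b')$$
is an equivalence of categories. Bijectivity on objects is immediate from the presentation, which lists the objects of $A\otimes B$ precisely as $\{a\otimes b : a\in A, b\in B\}$, and this is realized by $c$.

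For local essential surjectivity on $1$-cells, an arbitrary morphism in $A\otimes B$ is, by the generators-and-relations description, a word in generators $f_{i}\otimes 1$ and $1\otimes g_{j}$. The invertible $2$-cells $\Sigma_{f,g}$ let me commute each factor $1\otimes g_{j}$ past each adjacent $f_{i}\otimes 1$; after pushing all $A$-type generators to the left of all $B$-type generators, the $2$-functoriality relations in each variable collapse the resulting strings to a single $f\otimes 1$ followed by a single $1\otimes g$. Thus any $1$-cell is invertibly $2$-isomorphic to $c(f,g)=(f\otimes 1)(1\otimes g)$ for some $f,g$.

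Local faithfulness I would handle using projections. The Cartesian projections $A\times B\to A$ and $A\times B\to B$ are $2$-functors and hence cubical, so Theorem~\ref{cubicalmulticat} produces $2$-functors $\pi_{A}:A\otimes B\to A$ and $\pi_{B}:A\otimes B\to B$ satisfying $(\pi_{A},\pi_{B})\circ c = 1_{A\times B}$. If $c(\alpha,\beta)=c(\alpha',\beta')$, then applying $\pi_{A}$ and $\pi_{B}$ immediately forces $\alpha=\alpha'$ and $\beta=\beta'$.

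The main obstacle is local fullness: for a $2$-cell $\gamma: c(f,g)\Rightarrow c(f',g')$ in $A\otimes B$, the natural candidates are $\alpha=\pi_{A}(\gamma)$ and $\beta=\pi_{B}(\gamma)$, but concluding that $\gamma$ coincides with $(\alpha\otimes 1)*(1\otimes\beta)$ requires global control over the shape of $2$-cells in $A\otimes B$, not merely their images under the projections. I would establish this either by a concrete normal-form argument for $2$-cells built from $\alpha\otimes 1$, $1\otimes\beta$, and $\Sigma_{f,g}$ modulo the naturality and braid-like axioms, or, more conceptually, by constructing an invertible pseudonatural transformation $c\circ(\pi_{A},\pi_{B})\Rightarrow 1_{A\otimes B}$ whose naturality $2$-cells on the generating $1$-cells are (inverses of) the $\Sigma_{f,g}$ and invoking the uniqueness clause of Theorem~\ref{cubicalmulticat} to pin down $\gamma$. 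Combining these four ingredients then yields the bijective-on-objects biequivalence.
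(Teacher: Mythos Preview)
Your approach is correct in outline but more elaborate than the paper's. The paper argues globally: it constructs $\pi=(\pi_{A},\pi_{B}):A\otimes B\to A\times B$ via the universal property (exactly as you do for faithfulness), notes that $\pi c=1$, and then checks directly that $c\pi\cong 1_{A\otimes B}$. That single invertible icon already yields the biequivalence, so the separate verification of local essential surjectivity, faithfulness, and fullness is bypassed. In fact your ``more conceptual'' alternative for fullness---building the transformation $c\pi\Rightarrow 1$---\emph{is} the paper's whole proof; once you have it, the other three ingredients become redundant, so your argument completed along that route would contain the paper's as a proper sub-argument. The normal-form alternative for $2$-cells would also work but is genuinely harder to carry out cleanly.

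Two small inaccuracies. First, the cubical condition forces $c(f,g)=(1\otimes g)*(f\otimes 1)$, not $(f\otimes 1)*(1\otimes g)$; for essential surjectivity you should push the $A$-type generators to the \emph{right} using $\Sigma$ in its stated direction (this does not affect the conclusion since $\Sigma$ is invertible). Second, on generating $1$-cells $f\otimes 1$ and $1\otimes g$ the functor $c\pi$ is the identity, so the components of the icon $c\pi\Rightarrow 1$ there are identities; the $\Sigma^{-1}$'s appear as the components on composite $1$-cells such as $(f\otimes 1)*(1\otimes g)$. Finally, the bijection in Theorem~\ref{cubicalmulticat} is between functors, not $2$-cells, so it does not by itself pin down an individual $\gamma$; but this is moot once $c\pi\cong 1$ is in hand.
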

\begin{proof}[Sketch]
First, the explicit construction of $c$ makes it obvious that it is bijective-on-objects.  Since every strict functor is cubical, we use the universal property of $c$ to produce a unique 2-functor $\pi$ making the diagram below commute.
\[
\xy
{\ar^{c} (0,0)*+{A \times B}; (50,0)*+{A \otimes B} };
{\ar^{\pi} (50,0)*+{A \otimes B}; (50,-20)*+{A \times B} };
{\ar_{1} (0,0)*+{A \times B}; (50,-20)*+{A \times B} };
\endxy
\]
All that remains is to show that $c \pi$ is equivalent to $1_{A \otimes B}$, a calculation we leave for the reader.
\end{proof}

\section{Monoidal bicategories}

We begin by reminding the reader of the definitions of monoidal bicategory and monoidal functor, and then go on to state a stricter, \textbf{Cat}-enriched version of these definitions that will play a role later.  Because the monoidal structures we will construct are actually monoidal \textit{category} structures extended to take into account the 2-cells, these stricter kinds of monoidal bicategories will arise naturally.

\begin{definition}
A \textit{monoidal bicategory} is a one-object tricategory in the fully algebraic sense of \cite{gthesis}.  Thus a monoidal bicategory consists of
\begin{itemize}
\item a bicategory  $B$;
\item a functor $\otimes: B \times B \rightarrow B$;
\item a functor $I: 1 \rightarrow B$;
\item adjoint equivalence $\bs{a}, \bs{l}$, and $\bs{r}$ as in the definition of a tricategory; and
\item invertible modifications $\pi, \mu, \lambda$, and $\rho$ as in the definition of a tricategory
\end{itemize}
all subject to the tricategory axioms.
\end{definition}

\begin{definition}
A \textit{monoidal functor} between monoidal bicategories is a functor between the one-object tricategories.  Such a monoidal functor consists of
\begin{itemize}
\item a functor of the underlying bicategories $F:B_{1} \rightarrow B_{2}$;
\item adjoint equivalences $\bs{\chi}$ and $\bs{\iota}$ as in the definition of weak functor between tricategories; and
\item invertible modifications $\omega, \delta$, and $\gamma$ as in the definition of weak functor
\end{itemize}
all subject to axioms which are identical to the tricategory functor axioms aside from source and target considerations.
\end{definition}

\begin{definition}
A \textit{lax monoidal functor} between monoidal bicategories consists of
\begin{itemize}
\item a functor of the underlying bicategories $F:B_{1} \rightarrow B_{2}$;
\item transformations $\chi$ and $\iota$ as in the definition of weak functor between tricategories; and
\item invertible modifications $\omega, \delta$, and $\gamma$ as in the definition of weak functor
\end{itemize}
all subject to axioms which are identical to the tricategory functor axioms aside from source and target considerations.
\end{definition}

\begin{remark}
Note that the difference in the two definitions above lies with $\chi, \iota$.  In a monoidal functor, these are required to be part of adjoint equivalences $\chi \dashv_{eq} \chi^{\dot}, \iota \dashv_{eq} \iota^{\dot}$, while in a lax monoidal functor they are not.  In particular, this implies that, for a lax monoidal functor, the components on objects
\[
\begin{array}{rrcl}
\chi: & Fx \otimes_{2} Fy &  \rightarrow & F(x \otimes_{1} y), \\
\iota: & I_{Fx} & \rightarrow & FI_{x}
\end{array}
\]
are not necessarily equivalences in $B_{2}$.  The lax monoidal functors we use here are the same as the lax homomorphisms of \cite{gg}.
\end{remark}

\begin{remark}
For completeness, we should also define symmetric monoidal bicategories and the appropriate notion of monoidal functor between those, as the main theorem of this paper will be that a certain functor is a symmetric monoidal one.  We omit these details and definitions here, as it is actually the monoidal aspects of this functor that are of primary interest, not the relationship with the symmetry.  For readers interested in precise definitions, we refer to \cite{ds, gur2, sp}.
\end{remark}

\begin{definition}
A \textit{strongly monoidal 2-category} consists of
\begin{itemize}
\item a 2-category $A$,
\item a 2-functor $m:A \times A \rightarrow A$,
\item an object $I$ of $A$,
\item a 2-natural isomorphism $\mu$,
\[
\xy
{\ar^{m \times 1} (0,0)*+{A \times A \times A}; (40,0)*+{A \times A} };
{\ar^{m} (40,0)*+{A \times A}; (40,-15)*+{A} };
{\ar_{1 \times m} (0,0)*+{A \times A \times A}; (0,-15)*+{A \times A} };
{\ar_{m} (0,-15)*+{A \times A}; (40,-15)*+{A} };
(20, -7)*{\Downarrow \mu}
\endxy
\]
\item 2-natural isomorphisms $\lambda, \rho$,
\[
\xy
{\ar^{1 \times I} (0,0)*+{A \times *}; (35,0)*+{A \times A} };
{\ar^{m} (35,0)*+{A \times A}; (35,-20)*+{A} };
{\ar_{\cong} (0,0)*+{A \times *}; (35,-20)*+{A} };
(22,-7)*{\Downarrow \lambda};
{\ar^{I \times 1} (55,0)*+{* \times A}; (90,0)*+{A \times A} };
{\ar^{m} (90,0)*+{A \times A}; (90,-20)*+{A} };
{\ar_{\cong} (55,0)*+{* \times A}; (90,-20)*+{A} };
(77,-7)*{\Downarrow \rho}
\endxy
\]
\end{itemize}
such that the usual monoidal category axioms hold.
\end{definition}

\begin{remark}
This definition is merely the $\mathbf{Cat}$-enriched version of the definition of a monoidal category.
\end{remark}

\begin{proposition}\label{underlyingbicat}
Every strongly monoidal 2-category has an underlying monoidal bicategory.
\end{proposition}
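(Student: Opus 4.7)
The plan is to unpack a strongly monoidal 2-category $(A, m, I, \mu, \lambda, \rho)$ and repackage its data as a monoidal bicategory. The underlying bicategory is $A$ itself, viewed through the embedding $\mb{2Cat} \hookrightarrow \mb{Bicat}$; the tensor functor $\otimes$ of the monoidal bicategory is taken to be $m$, and the unit $I : 1 \to A$ is the same, both being 2-functors and so in particular functors of bicategories.

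To produce the adjoint equivalences $\bs{a}, \bs{l}, \bs{r}$, I would use the fact that any 2-natural isomorphism between 2-functors is in particular a pseudonatural transformation whose components are isomorphisms, and that an isomorphism in a 2-category extends canonically to an adjoint equivalence by declaring the unit and counit to be identity 2-cells. Applying this pointwise to $\mu$, $\lambda$, and $\rho$ yields strict adjoint equivalences $\bs{a}$, $\bs{l}$, $\bs{r}$ of the required source and target.

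For the invertible modifications $\pi, \mu', \lambda', \rho'$ (where I write primes to distinguish the tricategory modifications from the 2-natural isomorphisms of the strongly monoidal 2-category), each fills in a pasting diagram of copies of $\bs{a}, \bs{l}, \bs{r}$. Because the Mac Lane pentagon and triangle axioms hold as literal equations between 2-natural transformations in a strongly monoidal 2-category, and because vertical and horizontal composition in $A$ are strictly associative and unital, the source and target of each such modification are in fact equal pasting composites. Hence each of $\pi, \mu', \lambda', \rho'$ can be taken to be the identity modification.

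The main obstacle is then to check that every tricategory axiom holds. Each such axiom is an equality between two pasting composites built from the structural 2-cells and the modifications $\pi, \mu', \lambda', \rho'$. With these modifications all equal to identities, and with all remaining 2-cells arising from 2-natural transformations in a 2-category, both sides of each axiom collapse to a single pasting of identity 2-cells, so the equation holds automatically. The only genuine input beyond notation is the Mac Lane axioms already assumed in the definition of a strongly monoidal 2-category, with the rest reducing to careful bookkeeping.
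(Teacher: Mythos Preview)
Your proposal is correct and follows essentially the same approach as the paper: take the underlying bicategory, tensor, and unit unchanged; promote the 2-natural isomorphisms $\mu, \lambda, \rho$ to adjoint equivalences with identity unit and counit; and observe that the higher modifications $\pi, \mu', \lambda', \rho'$ may be taken to be identities because they correspond to the monoidal category axioms (or their consequences), after which the tricategory axioms hold trivially. The paper's proof is somewhat terser but makes exactly these moves.
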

\begin{proof}
The underlying bicategory, tensor product, and unit are all the same as those in the strongly monoidal 2-category.  The associativity adjoint equivalence is given by  $\mu$ and $\mu^{-1}$ with unit and counit both being the identity; the analogous statements hold for the left and right unit adjoint equivalences.  The remaining data ($\pi, \mu, \lambda, \rho$ in the notation of monoidal bicategories, not to be confused with the data above) can all be taken to be identities as they correspond to axioms or consequences of the axioms for strongly monoidal 2-categories.
\end{proof}

\begin{definition}\label{products}
Let $A$ be a 2-category.
\begin{enumerate}
\item A \textit{product} $a \times b$ of objects $a,b \in A$ is an object $a \times b \in A$ together with 2-natural isomorphisms
\[
A(x, a \times b) \cong A(x,a) \times A(x,b)
\]
for all objects $x \in A$.
\item A \textit{terminal object} of $A$ is an object $* \in A$ together with 2-natural isomorphisms
\[
A(x,*) \cong 1
\]
for all objects $x \in A$, where $1$ denotes a terminal category.
\end{enumerate}
\end{definition}

\begin{proposition}\label{2catprods}
Let $A$ be a 2-category with all binary products and a terminal object.  Then a choice of product for every pair of objects and a choice of terminal object equip $A$ with the structure of a strongly monoidal 2-category.
\end{proposition}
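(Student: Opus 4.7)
The plan is to imitate the classical proof that a category with finite products carries a cartesian monoidal structure, but done strictly in the $\mathbf{Cat}$-enriched setting. Since Definition \ref{products} encodes binary products and the terminal object via 2-natural representing isomorphisms of $\mathbf{Cat}$-valued 2-functors, the 2-categorical Yoneda lemma furnishes all the structure maps of a strongly monoidal 2-category and verifies the axioms automatically.

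First, I would extend the chosen binary products to a 2-functor $m: A \times A \to A$. Given 1-cells $f: a \to a'$ and $g: b \to b'$, applying the 2-natural isomorphism $A(x, a' \times b') \cong A(x,a') \times A(x,b')$ at $x = a \times b$ to the pair $(f \pi_1, g \pi_2)$ (where $\pi_1, \pi_2$ are the projections obtained from the representing isomorphism at $x = a \times b$) yields a unique 1-cell $m(f,g): a \times b \to a' \times b'$. Pairs of 2-cells induce 2-cells $m(\alpha,\beta)$ by the same mechanism applied one level up. Strict 2-functoriality (preservation of identities, composition, and horizontal and vertical composition of 2-cells on the nose) is forced by the uniqueness clause of the universal property together with the fact that the defining representing isomorphisms are strictly 2-natural.

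Next I would construct $\mu$, $\lambda$, and $\rho$. For each triple $a, b, c$, iterating the representing isomorphism twice gives the 2-natural isomorphism
\[
A(x, (a \times b) \times c) \cong A(x, a) \times A(x, b) \times A(x, c) \cong A(x, a \times (b \times c))
\]
in $x$. Evaluating at $x = (a\times b)\times c$ and chasing the identity produces a specific 1-cell $\mu_{a,b,c}: (a \times b) \times c \to a \times (b \times c)$, invertible because the representing isomorphisms are. Its 2-naturality in $a, b, c$ follows by separately varying each entry and appealing to naturality of the representing isomorphisms. The unitors are constructed identically using $A(x, *) \cong 1$ in combination with the binary product isomorphism.

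Finally, the pentagon and triangle axioms each assert equality of two parallel 1-cells with codomain a product (or iterated product); to check these, one composes both sides with the family of projections extracted from the representing isomorphisms and verifies they produce the same tuples, at which point the uniqueness clause of the universal property forces the 1-cells themselves to coincide. The principal obstacle is not mathematical depth but bookkeeping: one must verify that every structure cell and every axiom really is an \emph{equality} rather than a coherent isomorphism. This holds because Definition \ref{products} demands 2-natural isomorphisms (not merely equivalences) between 2-functors into $\mathbf{Cat}$, so the 2-categorical Yoneda lemma delivers the required equalities on the nose, and Proposition \ref{underlyingbicat} then justifies calling the result a strongly monoidal structure rather than just a monoidal bicategory structure.
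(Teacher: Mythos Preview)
Your proposal is correct and is essentially the same approach the paper takes: the paper's proof is the single sentence ``The proof here is identical to the proof that categories with chosen binary products and a chosen terminal object are monoidal,'' and what you have written is exactly a careful unpacking of that sentence in the $\mathbf{Cat}$-enriched setting. One small quibble: your closing reference to Proposition~\ref{underlyingbicat} is not really to the point, since that proposition goes in the other direction (from strongly monoidal 2-category to monoidal bicategory); the reason you get a strongly monoidal 2-category on the nose is precisely, as you already argued, that the representing isomorphisms in Definition~\ref{products} are 2-natural isomorphisms rather than equivalences.
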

\begin{proof}
The proof here is identical to the proof that categories with chosen binary products and a chosen terminal object are monoidal.
\end{proof}

\begin{definition}
A \textit{\textbf{Cat}-lax monoidal functor} $F:A_{1} \rightarrow A_{2}$ between strongly monoidal 2-categories is a 2-functor $F$ equipped with
\begin{itemize}
\item 1-cells
\[
\chi_{x,y}:Fx \otimes_{2} Fy \rightarrow F(x \otimes_{1} y)
\]
in $A_{2}$ for every pair of objects $x,y$ in $A_{1}$, which are 2-natural in $x,y$; and
\item a 1-cell
\[
\iota:I_{2} \rightarrow FI_{1}
\]
in $A_{2}$ where $I_{j}$ denotes the unit object in $A_{j}$.
\end{itemize}
These are required to satisfy the usual axioms for a lax monoidal functor between monoidal categories.
\end{definition}

\begin{definition}
A \textit{\textbf{Cat}-oplax monoidal functor} $F:A \rightarrow B$ between strongly monoidal 2-categories is a 2-functor $F$ equipped with
\begin{itemize}
\item 1-cells
\[
\chi_{x,y}:F(x \otimes_{1} y) \rightarrow Fx \otimes_{2} Fy
\]
in $A_{2}$ for every pair of objects $x,y$ in $A_{1}$, which are 2-natural in $x,y$; and
\item a 1-cell
\[
\iota:FI_{1} \rightarrow I_{2}
\]
in $A_{2}$ where $I_{j}$ denotes the unit object in $A_{j}$.
\end{itemize}
These are required to satisfy the usual axioms for an oplax monoidal functor between monoidal categories.
\end{definition}

\begin{remark}
Once again, these are merely the \textbf{Cat}-enriched versions of the definitions of lax monoidal functor and oplax monoidal functor.  In particular, we can consider the free monoidal $\mathcal{V}$-category 2-monad on the 2-category of $\mathcal{V}$-categories, $\mathcal{V}$-functors, and $\mathcal{V}$-natural transformations for suitably nice $\mathcal{V}$.  When $\mathcal{V} = \mb{Cat}$, the algebras for this 2-monad are the strongly monoidal 2-categories, and the (op)lax algebra morphisms are the \textbf{Cat}-(op)lax monoidal functors.  This observation will be useful when applying the theory of doctrinal adjunction.
\end{remark}

\begin{proposition}
Let $F:A_{1} \rightarrow A_{2}$ be a \textbf{Cat}-lax monoidal functor between strongly monoidal 2-categories.  Then $F$ can be equipped with the structure of a lax monoidal functor between the underlying monoidal bicategories.  Furthermore, if the components
\[
\begin{array}{rrcl}
\chi_{x,y}: & Fx \otimes_{2} Fy  & \rightarrow & F(x \otimes_{1} y) \\
\iota: & I_{2} & \rightarrow & FI_{1}
\end{array}
\]
are all internal equivalences in $A_{2}$, then this lax monoidal functor is a monoidal functor.
\end{proposition}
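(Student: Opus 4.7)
The plan is to assemble the data of a lax monoidal functor from the given \textbf{Cat}-lax monoidal functor data, exploiting the fact that the underlying monoidal bicategories produced by Proposition \ref{underlyingbicat} have every piece of coherence data given by an identity (the associator is the 2-natural iso $\mu$ viewed as an adjoint equivalence with identity unit and counit, and similarly for the unit constraints, with $\pi,\mu,\lambda,\rho$ all being identity modifications).

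First I would set up the transformations. The 1-cells $\chi_{x,y}:Fx \otimes_{2} Fy \rightarrow F(x \otimes_{1} y)$, being 2-natural in $(x,y)$, constitute a transformation in the sense required by the monoidal-bicategory definition with the naturality 2-cells simply being identities; the same holds for $\iota$. Next I would define the modifications $\omega$, $\delta$, and $\gamma$. The source and target of each of these modifications is a pasting built from components of $\chi$, $\iota$, the associator $\mu$, the unitors $\lambda,\rho$, and identity constraints. Because the strongly monoidal 2-category structure makes $\mu$, $\lambda$, and $\rho$ strictly 2-natural, and because the \textbf{Cat}-lax monoidal functor axioms are precisely the pentagon and triangle identities in the 2-category $A_2$, the parallel 2-cells serving as source and target for each of $\omega,\delta,\gamma$ are literally equal. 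We may therefore take $\omega$, $\delta$, $\gamma$ to be identity modifications. The four tricategory-functor axioms, being equations of pasted composites all built from identity modifications and identity naturality 2-cells, are then automatically satisfied.

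The main obstacle, and really the only real bookkeeping task, is verifying that the identity choice for $\omega,\delta,\gamma$ is well-typed: one must unwind the pasting diagrams specifying the source and target of each modification and check that, under the simplifications afforded by Proposition \ref{underlyingbicat}, they reduce to exactly the two sides of the associativity and unit coherence diagrams for a \textbf{Cat}-lax monoidal functor. This is mechanical but requires care in matching up the tricategorical conventions with the \textbf{Cat}-enriched ones.

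For the final sentence of the statement, suppose each $\chi_{x,y}$ and $\iota$ is an internal equivalence in $A_{2}$. Then $\chi$, regarded as a pseudonatural transformation between 2-functors, has every 1-cell component an equivalence, and by a standard result in bicategory theory such a pseudonatural transformation is itself an equivalence in the bicategory of 2-functors and pseudonatural transformations, and may be promoted to an adjoint equivalence; the same applies to $\iota$. Replacing $\chi$ and $\iota$ by the chosen adjoint equivalences $\bs{\chi}$ and $\bs{\iota}$ upgrades the lax monoidal functor constructed above to a monoidal functor, since the axioms remain satisfied (they only involve the underlying 1-cells and the modifications $\omega,\delta,\gamma$, which we have already arranged to be identities).
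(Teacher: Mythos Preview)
Your proposal is correct and follows essentially the same approach as the paper: take $\chi,\iota$ from the \textbf{Cat}-lax structure, set the top-dimensional data $\omega,\delta,\gamma$ to identities (which is well-typed precisely because the \textbf{Cat}-lax monoidal functor axioms hold and the underlying monoidal bicategories have identity coherence modifications), and for the final claim invoke the fact that a pseudonatural transformation is an adjoint equivalence iff its components are equivalences. The paper's proof is terser but records exactly these two observations.
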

\begin{proof}
The underlying monoidal functor between monoidal bicategories is just $F$, and the transformations $\chi, \iota$ are those given in the \textbf{Cat}-lax structure.  As in the proof of Proposition \ref{underlyingbicat}, the top-dimensional data (in the notation of monoidal functors between monoidal bicategories, $\omega, \delta, \gamma$) can all be taken to be identities.  Since the only difference between a lax monoidal functor (in the sense used here) and a monoidal functor is the structure of $\chi, \iota$, the final claim follows from the observation that a pseudonatural transformation is part of an adjoint equivalence if and only if all the components are equivalences.
\end{proof}

We give two results here related to the process of turning an $\mb{Cat}$-oplax monoidal functor into a monoidal functor between monoidal bicategories.  The first is related to doctrinal adjunction \cite{k}, and while we do not use the result directly, we will need the formula given in the proof below.  The second will be used in the proof of the main result, Theorem \ref{stmonoidal}.

\begin{lemma}\label{oplaxtomonoidal1}
Let $(A_{1}, \otimes_{1}), (A_{2}, \otimes_{2})$ be strongly monoidal 2-categories, and let $F \dashv U$ be a 2-adjunction between them with $F:A_{1} \rightarrow A_{2}$.  Assume that the components of the unit and counit for this adjunction are all internal equivalences, and that $U$ has been given a $\mb{Cat}$-lax monoidal structure.  Then if the components
\[
\varphi_{xy}:Ux \otimes_{1} Uy \rightarrow U(x \otimes_{2} y)
\]
are all internal equivalences, so are the components
\[
\psi_{xy}:F(x \otimes_{1} y) \rightarrow Fx \otimes_{2} Fy
\]
for the corresponding $\mb{Cat}$-oplax monoidal structure for $F$.  If the 1-cell
\[
\varphi: I_{2} \rightarrow UI_{1}
\]
for the monoidal structure on $U$ is an internal equivalence in $A_{2}$, then so is the 1-cell
\[
\psi:FI_{2} \rightarrow I_{1}
\]
for the oplax structure on $F$.
\end{lemma}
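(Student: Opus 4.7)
The plan is to produce the $\mb{Cat}$-oplax structure on $F$ as the mate of the $\mb{Cat}$-lax structure on $U$ under the adjunction $F \dashv U$, and then read off that its components are composites of equivalences. Since the statement requires a specific formula to be recorded for later use, I would not invoke doctrinal adjunction as a black box but instead write down the mates explicitly.

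First I would recall the general mates correspondence in the 2-adjunction $F \dashv U$: given a 1-cell $\varphi_{xy}: Ux \otimes_1 Uy \to U(x \otimes_2 y)$ in $A_1$, its mate is the 1-cell
\[
\psi_{xy} \; : \; F(x \otimes_1 y) \xrightarrow{F(\eta_x \otimes_1 \eta_y)} F(UFx \otimes_1 UFy) \xrightarrow{F \varphi_{Fx, Fy}} FU(Fx \otimes_2 Fy) \xrightarrow{\varepsilon_{Fx \otimes_2 Fy}} Fx \otimes_2 Fy
\]
in $A_2$, where $\eta, \varepsilon$ are the unit and counit of $F \dashv U$. Analogously, the unit mate $\psi : FI_1 \to I_2$ is the composite $FI_1 \xrightarrow{F\varphi} FUI_2 \xrightarrow{\varepsilon_{I_2}} I_2$. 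I would then record that the general machinery of doctrinal adjunction (as in \cite{k}), applied to the 2-monad on 2-categories whose algebras are strongly monoidal 2-categories, guarantees that these mates assemble into a genuine $\mb{Cat}$-oplax structure on $F$; one need only check the standard coherence axioms, which follow formally from those for $\varphi$ together with the triangle identities.

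The claim about equivalences is then immediate from the displayed formulas. Since $F$ is a 2-functor, it preserves internal equivalences. By hypothesis $\eta_x, \eta_y$ and $\varepsilon_{Fx \otimes_2 Fy}$ are equivalences in $A_1$ and $A_2$ respectively, so $F(\eta_x \otimes_1 \eta_y)$ is an equivalence (products of equivalences with respect to a 2-functorial $\otimes_1$ are equivalences, and then $F$ preserves this). If in addition $\varphi_{Fx,Fy}$ is an equivalence, then so is $F\varphi_{Fx,Fy}$. Hence $\psi_{xy}$ is a composite of three equivalences, and is therefore itself an equivalence. The same reasoning applied to the shorter composite for $\psi : FI_1 \to I_2$ gives the unit case.

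There is essentially no main obstacle: once the mate formula is written down, the statement reduces to the two elementary facts that 2-functors preserve internal equivalences and that internal equivalences in a 2-category are closed under composition. The only reason to spell the proof out is, as the author indicates, to make the explicit composite visible for reuse in later arguments.
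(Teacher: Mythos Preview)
Your proposal is correct and follows essentially the same approach as the paper: both write down the mate formula
\[
F(x \otimes_{1} y) \xrightarrow{F(\eta \otimes_{1} \eta)} F(UFx \otimes_{1} UFy) \xrightarrow{F\varphi_{Fx,Fy}} FU(Fx \otimes_{2} Fy) \xrightarrow{\varepsilon} Fx \otimes_{2} Fy
\]
and then observe that each factor is an equivalence because 2-functors preserve equivalences and $\otimes_{1}$ is a 2-functor. The paper's proof is terser (it simply says ``the proof for the unit is similar'' rather than writing out your composite $FI_{1} \to FUI_{2} \to I_{2}$), and you are slightly more explicit about invoking doctrinal adjunction to justify that the mates assemble into a $\mb{Cat}$-oplax structure, but the substance is identical.
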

\begin{proof}
The component $\psi_{xy}:F(x \otimes_{1} y) \rightarrow Fx \otimes_{2} Fy$ is given by the composite below:
\[
F(x \otimes_{1} y) \stackrel{F(\eta \otimes_{1} \eta)}{\longrightarrow} F(UFx \otimes_{1} UFy) \stackrel{F\varphi_{Fx,Fy}}{\longrightarrow} FU(Fx \otimes_{2} Fy) \stackrel{\varepsilon}{\longrightarrow} Fx \otimes_{2} Fy.
\]
If $\eta$ is an equivalence, then so is $\eta \otimes_{1} \eta$.  Since any 2-functor sends equivalences to equivalences, we get that each of the three 1-cells above is an equivalence, so their composite is as well.  The proof for the unit is similar.
\end{proof}

\begin{lemma}\label{oplaxtomonoidal2}
Let $(A_{1}, \otimes_{1}), (A_{2}, \otimes_{2})$ be strongly monoidal 2-categories, and let $F:A_{1} \rightarrow A_{2}$ be an $\mb{Cat}$-oplax monoidal functor between them.  If the components
\[
\begin{array}{c}
\varphi:FI_{1} \rightarrow I_{2}, \\
\varphi_{xy}:F(x \otimes_{1} y) \rightarrow Fx \otimes_{2} Fy
\end{array}
\]
are all internal equivalences in $A_{2}$, then their pseudo-inverses $\varphi_{0}^{\dot}, \varphi_{xy}^{\dot}$ are part of the structure of a monoidal functor $F:A_{1} \rightarrow A_{2}$ on the underlying monoidal bicategories.
\end{lemma}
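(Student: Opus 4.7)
The plan is to take pseudo-inverses of the oplax data and promote them to the lax data of a monoidal functor between the underlying monoidal bicategories (in the sense of Proposition \ref{underlyingbicat}). Since every internal equivalence in a bicategory can be enhanced to an adjoint equivalence, I first choose, for each pair $x,y$, a pseudo-inverse $\varphi_{xy}^{\dot}$ together with invertible unit and counit 2-cells exhibiting $\varphi_{xy} \dashv_{eq} \varphi_{xy}^{\dot}$, and similarly $\varphi^{\dot}$ for the unit. These will be the components $\chi$ and $\iota$ of the prospective monoidal functor.

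Next I would construct the pseudonaturality 2-cells for $\chi$ and $\iota$. By hypothesis the original $\varphi$ is $\mathbf{Cat}$-natural, that is, strictly 2-natural, so for a morphism $(f,g)$ in $A_1 \times A_1$ we have the equality $\varphi_{x'y'} \circ F(f \otimes_1 g) = (Ff \otimes_2 Fg) \circ \varphi_{xy}$. Pasting this equality with the unit and counit of the chosen adjoint equivalences yields an invertible 2-cell
\[
\chi_{xy} \circ (Ff \otimes_2 Fg) \Rightarrow F(f \otimes_1 g) \circ \chi_{x'y'},
\]
and the usual argument shows this is a pseudonaturality constraint; coherence with composition and identities of $f,g$ reduces to the triangle identities for the adjoint equivalences.

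The modifications $\omega, \delta, \gamma$ are then built by a completely parallel procedure. The $\mathbf{Cat}$-oplax axioms asserted for $\varphi, \varphi_{xy}$ are strict equalities of 2-functors, e.g.\ the associativity axiom
\[
(1 \otimes_2 \varphi_{yz}) \circ \varphi_{x, y\otimes_1 z} = (\varphi_{xy} \otimes_2 1) \circ \varphi_{x \otimes_1 y, z}
\]
(using that $\mu$ in $A_2$ is an identity by Proposition \ref{underlyingbicat}). Pasting this equality with the units and counits of the adjoint equivalences on each side produces an invertible modification $\omega$ of the required shape between the composites formed from the $\chi_{xy}$; similarly the unit axioms for the oplax structure yield $\delta$ and $\gamma$. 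The final paragraph uses the standard fact about adjoint equivalences: once one has invertible 2-cells fitting into the requisite pasting diagrams, any two such fillers differ by a 2-cell that can be annihilated by further pasting with the units and counits, so the $\omega, \delta, \gamma$ thus defined are uniquely determined.

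The main obstacle, and the only real work, is verifying the three monoidal functor axioms (the large pentagon-type axiom involving $\omega$ and two smaller ones involving $\omega, \delta, \gamma$). I expect to reduce each axiom to the corresponding $\mathbf{Cat}$-oplax axiom by a mating argument: both sides of the axiom are obtained from the same oplax pasting diagram by attaching units and counits of the adjoint equivalences in two different orders, and the triangle identities together with the naturality of modifications show that the two composites agree. No additional coherence data is needed because the monoidal structures on $A_1$ and $A_2$ are strongly monoidal, which kills all of the associator and unitor modifications $\pi, \mu, \lambda, \rho$ that would otherwise complicate the pasting calculus.
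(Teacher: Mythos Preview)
Your proposal is correct and follows essentially the same route as the paper: promote each $\varphi$ to an adjoint equivalence, obtain the pseudonaturality of $\varphi^{\dot}$ by mating the 2-naturality of $\varphi$, define $\omega,\delta,\gamma$ as mates of the (identity) oplax axioms, and derive the monoidal functor axioms from the triangle identities. One small correction: in your displayed oplax associativity axiom you have suppressed the associator 1-cells $\mu_1,\mu_2$ of the strongly monoidal 2-categories, justifying this by appeal to Proposition~\ref{underlyingbicat}; but that proposition only says the top-dimensional modifications $\pi,\mu,\lambda,\rho$ of the underlying monoidal bicategory are identities, not that the associator 2-natural isomorphism is the identity. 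The associators must still appear in the axiom, though this does not affect the mating argument.
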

\begin{proof}
Equip the pseudo-inverse pairs $(\varphi_{0}, \varphi_{0}^{\dot})$ and $(\varphi_{xy}, \varphi_{xy}^{\dot})$ with the structure of adjoint equivalences $\varphi \dashv_{eq} \varphi^{\dot}$.  Using this structure, the 2-naturality of the $\varphi_{xy}$ gives the $\varphi_{xy}^{\dot}$ the structure of a pseudonatural transformation $\otimes_{2} \circ (F \times F) \Rightarrow F \circ \otimes_{1}$.  The invertible 2-cells $\omega, \delta, \gamma$ for the monoidal functor $F$ are then the mates of the identity 2-cells for the three $\mb{Cat}$-oplax monoidal functor axioms.  We leave it to the reader to verify that the monoidal functor axioms are then a consequence of the triangle identities.
\end{proof}

\section{Icons}

While for many purposes the default notion of 2-cell between functors of bicategories is that of pseudonatural transformation (or some variant), there is another equally natural notion of 2-cell called an \textit{icon}.  Icons are essentially the many-object version of monoidal transformations, and thus appear when studying degeneracy and monoidal structures for higher categories (see \cite{cg1, cg2}).  Icons also appear naturally when viewing bicategories as the algebras for a 2-monad \cite{la3, lp}.  Here the underlying 2-category is that of category-enriched graphs, and the algebra 2-cells that the relevant 2-monad produces are precisely icons.  This section will give a brief review of the basic definitions required, and then use icons as the 2-cells in the construction of two strongly monoidal 2-categories.

\begin{definition}
Let $F,G:B \rightarrow C$ be lax functors between bicategories with constraints $\varphi_{0}, \varphi_{2}$ for $F$, and $\psi_{0}, \psi_{2}$ for $G$.  Assume that $F$ and $G$ agree on objects.  An \textit{icon} $\alpha:F \Rightarrow G$ consists of natural transformations
\[
\alpha_{ab}:F_{ab} \Rightarrow G_{ab}:B(a,b) \rightarrow C(Fa,Fb)
\]
(note here that we require $Fa=Ga,Fb=Gb$ so that the functors $F_{ab}, G_{ab}$ have a common target) such that the following diagrams commute.  (Note that we suppress the 0-cell source and target subscripts for the transformations $\alpha_{ab}$ and instead only list the 1-cell for which a given 2-cell is the component.)
\[
\xy
{\ar@{=>}^{\varphi_{0}} (0,0)*+{I_{Fa}}; (25,0)*+{FI_{a}} };
{\ar@{=>}^{\alpha_{I}} (25,0)*+{FI_{a}}; (25,-15)*+{GI_{a}} };
{\ar@{=>}_{\psi_{0}} (0,0)*+{I_{Fa}}; (25,-15)*+{GI_{a}} };
{\ar@{=>}^{\varphi_{2}} (50,0)*+{Ff*Fg}; (80,0)*+{F(f*g)} };
{\ar@{=>}^{\alpha_{f*g}} (80,0)*+{F(f*g)}; (80,-15)*+{G(f*g)} };
{\ar@{=>}_{\alpha_{f}*\alpha_{g}} (50,0)*+{Ff*Fg}; (50,-15)*+{Gf*Gg} };
{\ar@{=>}_{\psi_{2}} (50,-15)*+{Gf*Gg}; (80,-15)*+{G(f*g)} };
\endxy
\]
\end{definition}

\begin{definition}\label{deficon2cat}
The 2-category $\mathbf{Icon}$ is defined to have objects bicategories, 1-cells functors, and 2-cells icons between them.
\end{definition}

\begin{remark}
Note that this 2-category was called $\mathbf{Hom}$ in \cite{lp}.  To actually prove these cells, with their obvious composition laws, give a 2-category is a relatively basic calculation that we leave to the reader.
\end{remark}

The following lemma is an immediate consequence of the fact that the universal cubical functor $c:A \times B \rightarrow A \otimes B$ is a bijective-on-objects biequivalence, i.e., an internal equivalence in $\mathbf{Icon}$.

\begin{lemma}\label{cubicalicons}
Let $A,B,C$ be 2-categories, and let $F,G:A \times B \rightarrow C$ be a pair of cubical functors that agree on objects.  Then icons $\alpha:F \Rightarrow G$ are in natural bijection with icons $\tilde{\alpha}:\tilde{F} \Rightarrow \tilde{G}$ between the 2-functors induced by the universal property.
\end{lemma}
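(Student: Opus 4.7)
The plan is to derive the bijection as a direct consequence of Lemma \ref{universalboobieq}, which says that $c: A \times B \to A \otimes B$ is an internal equivalence in the 2-category $\mathbf{Icon}$. Since representables preserve internal equivalences in any 2-category, whiskering on the right with $c$ gives an equivalence of categories
\[
c^*: \mathbf{Icon}(A \otimes B, C) \longrightarrow \mathbf{Icon}(A \times B, C).
\]
In particular $c^*$ is fully faithful on hom-sets: for any pair of functors $P, Q: A \otimes B \to C$, whiskering $\tilde{\alpha} \mapsto \tilde{\alpha} * c$ is a bijection between icons $P \Rightarrow Q$ and icons $Pc \Rightarrow Qc$.

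Specializing to $P = \tilde{F}$ and $Q = \tilde{G}$, the 2-functors corresponding to the cubical functors $F, G$ under Theorem \ref{cubicalmulticat}, we have $\tilde{F} c = F$ and $\tilde{G} c = G$ by construction, so the bijection above becomes exactly the one asserted in the statement. The hypothesis that $F$ and $G$ agree on objects, combined with the fact that $c$ is bijective on objects, forces $\tilde{F}$ and $\tilde{G}$ to agree on objects as well, so the notion of icon between them is well-defined. Naturality in $F$ and $G$ is automatic, since whiskering is functorial.

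The only real content is the formal assertion that $c^*$ is an equivalence of categories, but this is standard: a pseudo-inverse $\pi$ for $c$ in $\mathbf{Icon}$, together with invertible icons $\eta: 1 \cong \pi c$ (which can in fact be taken to be the identity, by the construction of $\pi$ in the proof of Lemma \ref{universalboobieq}) and $\varepsilon: c\pi \cong 1$, yields a pseudo-inverse $\pi^*$ to $c^*$, with $\eta * (-)$ and $\varepsilon * (-)$ providing the required natural isomorphisms on hom-categories. I do not expect any genuine obstacle in carrying this out; the lemma is essentially just unpacking the content of ``internal equivalence in $\mathbf{Icon}$'' together with the universal property in Theorem \ref{cubicalmulticat}.
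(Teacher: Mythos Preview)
Your proposal is correct and follows exactly the same approach as the paper: the paper states the lemma as ``an immediate consequence of the fact that the universal cubical functor $c:A \times B \rightarrow A \otimes B$ is a bijective-on-objects biequivalence, i.e., an internal equivalence in $\mathbf{Icon}$,'' and your argument simply unpacks this by noting that the representable $\mathbf{Icon}(-,C)$ sends the equivalence $c$ to an equivalence of categories, whose full faithfulness gives the bijection. Your observation that bijectivity-on-objects of $c$ is what guarantees $\tilde{F}$ and $\tilde{G}$ agree on objects is a detail the paper leaves implicit.
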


We now have the following easy corollary of Proposition \ref{2catprods}.

\begin{corollary}\label{Iconsmon}
The 2-category $\mathbf{Icon}$ of bicategories, functors, and icons can be given the structure of a strongly monoidal 2-category under products.
\end{corollary}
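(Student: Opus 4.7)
The plan is to invoke Proposition \ref{2catprods} by verifying that $\mathbf{Icon}$ has all binary products and a terminal object in the sense of Definition \ref{products}.

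For the terminal object, take $*$ to be the one-object, one-1-cell, one-2-cell bicategory. Every bicategory $X$ has a unique functor to $*$. Any two parallel functors into $*$ agree on objects vacuously, and the only 2-cell components are forced to be the identity 2-cell of $*$, which trivially satisfies the icon axioms. So $\mathbf{Icon}(X,*)$ is the terminal category, 2-naturally in $X$.

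For binary products, take $A \times B$ to be the Cartesian product of bicategories, equipped with the strict 2-functor projections $p_1, p_2$. The universal property at the 1-cell level is standard: given functors $F:X \rightarrow A$ and $G:X \rightarrow B$, the pairing $\langle F,G\rangle:X \rightarrow A \times B$ is the unique functor whose composites with the projections are $F$ and $G$, with constraint data $\varphi_0, \varphi_2$ defined componentwise from those of $F$ and $G$. The point to emphasise is what happens at the level of icons. If $\alpha:F \Rightarrow F'$ and $\beta:G \Rightarrow G'$ are icons, then $F$ and $F'$ agree on objects and $G$ and $G'$ agree on objects, so $\langle F,G\rangle$ and $\langle F',G'\rangle$ also agree on objects. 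The natural transformations $(\alpha_{ab}, \beta_{ab})$ assemble into a natural transformation between the hom-functors of $\langle F,G\rangle$ and $\langle F',G'\rangle$; the two icon axioms for this pair reduce componentwise to the two icon axioms for $\alpha$ and $\beta$ separately, since the constraints $\varphi_0, \varphi_2$ for a pairing are computed componentwise. This gives the required 2-natural isomorphism
\[
\mathbf{Icon}(X, A \times B) \cong \mathbf{Icon}(X,A) \times \mathbf{Icon}(X,B),
\]
with naturality in $X$ being immediate since precomposition of an icon $\alpha$ with a functor $H:Y \rightarrow X$ acts on components as $(\alpha H)_{ab} = \alpha_{Ha,Hb}$, which commutes with taking pairs.

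There is no real obstacle here; the essential observation is simply that because icons require underlying functors to agree on objects, a compatible pair of icons literally is an icon of the pairing, with no coherence data to construct. Once both universal properties are in hand, Proposition \ref{2catprods} produces the strongly monoidal structure on $\mathbf{Icon}$ directly.
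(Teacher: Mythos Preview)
Your proposal is correct and follows exactly the same route as the paper: verify that the Cartesian product of bicategories and the terminal bicategory are products and terminal in the sense of Definition~\ref{products}, then invoke Proposition~\ref{2catprods}. The paper's own proof is a one-line assertion of this fact, whereas you have supplied the verification at the level of icons; there is no substantive difference in approach.
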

\begin{proof}
The usual product of bicategories and terminal bicategory are products and terminal in the sense of Definition \ref{products}.
\end{proof}

\begin{definition}\label{defgrayicon}
The 2-category $\mathbf{Gray}_{icon}$ is defined to be the locally full sub-2-category of $\mathbf{Icon}$ consisting of 2-categories, 2-functors, and icons.
\end{definition}

\begin{proposition}\label{Graysmon}
The Gray tensor product equips $\mathbf{Gray}_{icon}$ with the structure of a strongly monoidal 2-category.
\end{proposition}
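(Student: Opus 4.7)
The plan is to promote the known monoidal 1-category structure on $\mathbf{2Cat}$ under the Gray tensor product to a strongly monoidal 2-category structure on $\mathbf{Gray}_{icon}$. The new content is purely 2-dimensional: the action of $\otimes$ on icons, and the 2-naturality of the coherence isomorphisms. Both pieces can be obtained essentially for free from the universal property of the Gray tensor product (Theorem \ref{cubicalmulticat}) together with Lemma \ref{cubicalicons}, by importing the corresponding Cartesian facts from $\mathbf{Icon}$ (Corollary \ref{Iconsmon}).

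To extend $\otimes$ to a 2-functor on $\mathbf{Gray}_{icon} \times \mathbf{Gray}_{icon}$, I would define $\alpha \otimes \beta$ for icons $\alpha : F \Rightarrow F' : A \to A'$ and $\beta : G \Rightarrow G' : B \to B'$ as follows. The pair yields an icon $\alpha \times \beta : F \times G \Rightarrow F' \times G'$ in $\mathbf{Icon}$; whiskering with the universal cubical functor $c : A' \times B' \to A' \otimes B'$ gives an icon between the cubical functors $c \circ (F \times G)$ and $c \circ (F' \times G')$ from $A \times B$ to $A' \otimes B'$ (precomposition of a cubical functor with a 2-functor stays cubical, and the whiskered cell is identity on objects since cubical functors strictly preserve identities). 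Lemma \ref{cubicalicons} then supplies the required icon $\alpha \otimes \beta : F \otimes G \Rightarrow F' \otimes G'$. Because the bijection of Lemma \ref{cubicalicons} respects identities and both vertical and horizontal composition of icons, 2-functoriality of $\otimes$ is immediate.

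For the associator $\mu$ and unitors $\lambda, \rho$, the underlying 2-functor isomorphisms and the pentagon and triangle axioms are inherited from the known monoidal 1-category structure on $\mathbf{2Cat}$; what remains is 2-naturality with respect to icons. For $\mu$, this is the equation $\mu \ast ((\alpha \otimes \beta) \otimes \gamma) = (\alpha \otimes (\beta \otimes \gamma)) \ast \mu$ of icons out of $(A \otimes B) \otimes C$. Using Lemma \ref{cubicalicons} together with its evident 3-variable analogue (obtained by iterating it along the universal cubical functor $A \times B \times C \to (A \otimes B) \otimes C$), this equation pulls back to the equation of Cartesian icons expressing 2-naturality of the Cartesian associator in $\mathbf{Icon}$, which holds by Corollary \ref{Iconsmon}. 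The cases of $\lambda$ and $\rho$ are analogous, using the observation that the terminal 2-category serves as the unit.

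The main obstacle, in what is otherwise a clean transfer of structure, lies in the first step: one must verify carefully that the whiskered cell $c \ast (\alpha \times \beta)$ really satisfies the icon axioms between cubical functors (and not merely the laxer pseudonatural transformation axioms), and that the resulting assignment $\alpha \otimes \beta$ obeys the strict 2-functor laws rather than only pseudofunctor laws. Both verifications ultimately reduce to the fact that cubical functors strictly preserve identities together with the canonicity of the bijection in Lemma \ref{cubicalicons}, which makes it respect all relevant composites.
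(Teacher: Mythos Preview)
Your proposal is correct and takes a genuinely different route from the paper. The paper works directly with the generators-and-relations description of $A \otimes B$: it defines $(\alpha \otimes \beta)$ componentwise on the generating 1-cells $f \otimes 1$ and $1 \otimes g$, checks the icon axioms and 2-functoriality by hand, and verifies 2-naturality of $\mu, \lambda, \rho$ by inspecting generating cells. You instead exploit the universal property throughout: the icon $\alpha \otimes \beta$ is defined as the unique icon corresponding under Lemma~\ref{cubicalicons} to the whiskering $c_{A',B'} * (\alpha \times \beta)$, and 2-naturality of the Gray associator is pulled back along the iterated universal cubical functor to 2-naturality of the Cartesian associator in $\mathbf{Icon}$. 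Your approach is more conceptual and makes transparent \emph{why} the result holds (the 2-dimensional data is transported from the Cartesian structure via the natural family of equivalences $c$), at the cost of relying on a multivariable extension of Lemma~\ref{cubicalicons} and the compatibility $\mu \circ c_{3} = \tilde{c}_{3} \circ a$, both of which are easy but not stated in the paper. The paper's approach is more elementary and self-contained, needing nothing beyond the explicit description of $\otimes$.

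One small remark: the ``main obstacle'' you flag at the end is not actually an obstacle. The whiskered cell $c * (\alpha \times \beta)$ is a horizontal composite in the 2-category $\mathbf{Icon}$ and is therefore automatically an icon; likewise the bijection in Lemma~\ref{cubicalicons} is itself given by whiskering with the equivalence $c$, so its compatibility with vertical and horizontal composition is immediate from the 2-category axioms for $\mathbf{Icon}$. No separate verification is required.
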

\begin{proof}
The multiplication in $\mathbf{Gray}_{icon}$ is to be the Gray tensor product, so we must equip it with the structure of a 2-functor
\[
\otimes: \mathbf{Gray}_{icon} \times \mathbf{Gray}_{icon} \rightarrow \mathbf{Gray}_{icon}.
\]
It is clear that $\otimes(A,B) = A \otimes B$ and that $\otimes(F,G) = F \otimes G$, but we must define it on icons as well.  Let $\alpha:F \Rightarrow F', \beta:G \Rightarrow G'$ be icons.  Then $(F \otimes G)(a,b) = (Fa,Gb) = (F'a,G'b) = (F' \otimes G')(a,b)$ so $F \otimes G$ and $F' \otimes G'$ agree on objects, hence we may define an icon between them.  Since the 1-cells of $A \otimes B$ are generated by 1-cells of the form $f \otimes 1, 1 \otimes g$, we will give the components of the icon $\alpha \otimes \beta$ at these 1-cells and then extend over composition.  These components are defined as
\[
\begin{array}{rcl}
(\alpha \otimes \beta)_{f \otimes 1} & = & \alpha_{f} \otimes 1: Ff \otimes 1_{Gb} \Rightarrow F'f \otimes 1_{Gb}, \\
(\alpha \otimes \beta)_{1 \otimes g} & = & 1 \otimes \beta_{g}:1_{Fa} \otimes Gg \Rightarrow 1_{Fa} \otimes G'g.
\end{array}
\]
In the case that we consider the 1-cell $1_{a} \otimes 1_{b}$, both of these definitions agree to give that
\[
(\alpha \otimes \beta)_{1 \otimes 1} = 1_{Fa} \otimes 1_{Gb} = 1_{(Fa,Gb)}
\]
by the icon axioms since $F,G$ are both 2-functors; this also verifies the icon unit axiom for $\alpha \otimes \beta$.  To extend this over composition, we must check that it respects the two relations on 1-cells in $A \otimes B$:
\[
\begin{array}{rcl}
(f \otimes 1)*(f' \otimes 1)&  = & (f*f') \otimes 1, \\
(1 \otimes g) * (1 \otimes g') & = & 1 \otimes (g * g').
\end{array}
\]
This is easy to compute using the icon axioms, the axioms for the Gray tensor product, and the fact that $F,G$ are strict 2-functors, as shown below for the first relation.
\[
\begin{array}{rcl}
(\alpha \otimes \beta)_{f \otimes 1} * (\alpha \otimes \beta)_{f' \otimes 1} & = & (\alpha_{f} \otimes 1)*(\alpha_{f'} \otimes 1) \\
& = & (\alpha_{f} * \alpha_{f'}) \otimes 1 \\
& = & \alpha_{f*f'} \otimes 1 \\
& = & (\alpha \otimes \beta)_{(f*f') \otimes 1}
\end{array}
\]
Thus we can define the icon $\alpha \otimes \beta$ as above on generating 1-cells, and extend it to an icon by setting $(\alpha \otimes \beta)_{f*g} = (\alpha \otimes \beta)_{f} * (\alpha \otimes \beta)_{g}$ for composite 1-cells.  This completes the definition of the functor $\otimes$ on cells.

Next we must show that $\otimes$ is a 2-functor.  Now $(F \otimes G)*(F' \otimes G') = (F*F') \otimes (G*G')$ and $1_{A} \otimes 1_{B} = 1_{A \otimes B}$, so $\otimes$ strictly preserves composition and units at the level of 1-cells. The equality
\[
(\alpha_{f} \otimes 1)*(\alpha'_{f} \otimes 1) = (\alpha_{f}*\alpha'_{f}) \otimes 1
\]
also shows that $(\alpha \otimes \beta)*(\alpha' \otimes \beta')$ and $(\alpha * \alpha') \otimes (\beta * \beta')$ have the same component at $f \otimes 1$; a similar calculation shows that these icons also have the same component at $1 \otimes g$.  Since they have the same component for every generating 1-cell, this shows that
\[
(\alpha \otimes \beta)*(\alpha' \otimes \beta') = (\alpha * \alpha') \otimes (\beta * \beta').
\]
It is immediately clear that the Gray tensor product of two identity icons is the identity, proving that $\otimes$ is a 2-functor.

Finally, we must show that the associativity and unit isomorphisms are 2-natural.  Since the \textit{category} of 2-categories and 2-functors is a monoidal category under the Gray tensor product, we know that these constraints satisfy naturality for 1-cells; we must therefore check the 2-dimensional aspect.  Thus given icons
\[
\begin{array}{c}
\alpha:F \Rightarrow F':A \rightarrow A',\\
\beta:G \Rightarrow G':B \rightarrow B',\\
\gamma:H \Rightarrow H':C \rightarrow C',
\end{array}
\]
we must check that
\[
a_{A',B',C'} * \Big( (\alpha \otimes \beta) \otimes \gamma \Big) = \Big( \alpha \otimes (\beta \otimes \gamma) \Big) * a_{A,B,C}
\]
as icons.  Therefore we must check that they have the same components on 1-cells, which by the icon axioms reduces to checking that they have the same components on generating 1-cells.  For a generating 1-cell of the form $(f \otimes 1) \otimes 1$, the component of $(\alpha \otimes \beta) \otimes \gamma$ is $(\alpha_{f} \otimes 1) \otimes 1$.  Horizontal composition with the 2-functor $a_{A',B',C'}$ then gives the component
\[
a_{A',B',C'} \Big( (\alpha_{f} \otimes 1) \otimes 1 \Big) = \alpha_{f} \otimes (1 \otimes 1).
\]
On the other hand, the component of $\Big( \alpha \otimes (\beta \otimes \gamma) \Big) * a_{A,B,C}$ at $(f \otimes 1) \otimes 1$ is just the component of $\Big( \alpha \otimes (\beta \otimes \gamma) \Big)$ at
\[
a_{A,B,C}\Big( (f \otimes 1) \otimes 1 \Big) = f \otimes (1 \otimes 1),
\]
and these are clearly equal.  Analogous computation for the two other kinds of generating 1-cells then show that the associator is 2-natural.  The unit isomorphisms can be shown to be 2-natural in a similar fashion.
\end{proof}

\begin{remark}
It is relatively simple to extend the strongly monoidal structures in Corollary \ref{Iconsmon} and Proposition \ref{Graysmon} to symmetric ones by noting that the symmetry isomorphisms are 2-natural.  Thus we get a \textbf{Cat}-enriched symmetric structure, which can then be viewed as a (special kind of) symmetric monoidal structure on the underlying bicategories.
\end{remark}

\section{Strictification}

This section will review the construction of the strictification functor $\textrm{st}: \mb{Bicat} \rightarrow \mb{2Cat}$.  Doing so produces the functor between \textit{categories}, but that will not suffice for our purposes later.  Thus we will also extend $\textrm{st}$ to a 2-functor $\textrm{st}:\mb{Icon} \rightarrow \mb{Gray}_{icon}$, and in fact show that this functor is left adjoint to the inclusion 2-functor $i: \mb{Gray}_{icon} \rightarrow \mb{Icon}$.  In order to construct the unit and counit for this adjunction, we will use the explicit construction of biequivalences $e:\textrm{st}B \rightarrow B, f:B \rightarrow \textrm{st}B$ for any bicategory $B$.  The main technical tool in this section is the coherence theorem for bicategories, and we refer the reader to \cite{js, mp} for a review.

It should be noted that the main result of this section, Theorem \ref{stleft2adj}, is not quite a consequence of the general theory of 2-monads as developed in \cite{bkp, la2}.  The general theory gives at least two ways to show that the inclusion $\mb{Gray}_{icon} \hookrightarrow \mb{Icon}$ has a left 2-adjoint.  One method shows that each of the two inclusions in the composite
\[
\mb{Gray}_{icon} \hookrightarrow \mb{Icon}_{s} \hookrightarrow \mb{Icon}
\]
has a left 2-adjoint, the first because it is of the form $f:^{*}:S$-$\mb{Alg}_{s} \rightarrow T$-$\mb{Alg}_{s}$ for a map of 2-monads $f:T \rightarrow S$ and the second because it is of the form $T$-$\mb{Alg}_{s} \rightarrow T$-$\mb{Alg}$.  The other method to abstractly produce a left 2-adjoint uses that $\mb{Icon}$ is nearly the 2-category of pseudo-algebras for the 2-monad $S$ whose strict algebras are 2-categories.  Unfortunately, neither of these methods immediately shows that the left adjoint is the familiar strictification functor st.  Furthermore, the explicit form of the unit and counit of this adjunction will be useful later, so we verify this adjunction using elementary means.

Let $B$ be a bicategory.  We will begin by constructing a 2-category $\textrm{st}B$, and then show that it is biequivalent to the original bicategory $B$.  In order to define the 2-cells of $\textrm{st}B$, it will be necessary to define the action of the functor $e:\textrm{st}B \rightarrow B$ on the 0- and 1-cells.  Now for the definition.  The 2-category $\textrm{st}B$ will have the same objects as $B$.  A 1-cell from $a$ to $b$ will be a string of composable 1-cells of $B$ starting at $a$ and ending at $b$.  There is unique empty or length 0 string $\varnothing_{a}$ from $a$ to $a$ for each object, and this will serve as the identity 1-cell.

Now we will define the function on underlying 0- and 1-cells of $e$.  On 0-cells, $e$ is the identity function.  On 1-cells, we define
\[
e(f_{n} f_{n-1} f_{1}) = (\cdots (f_{n}*f_{n-1})*f_{n-2}) \cdots *f_{2})*f_{1};
\]
for the empty string $\varnothing:a \rightarrow a$, we set $e(\varnothing) = I_{a}$.
The set of 2-cells between the strings $f_{n}f_{n-1}\cdots f_{1}$ and $g_{m}g_{m-1}\cdots g_{1}$ is defined to be the set of 2-cells between $e(f_{n}f_{n-1} \cdots f_{1})$ and $e( g_{m}g_{m-1}\cdots g_{1})$ in $B$.  It is now obvious how $e$ acts on 2-cells.

We will now give $\textrm{st}B$ the structure of a 2-category.  Composition of 1-cells is given by concatenation of strings, with the empty string as the identity.  It is immediate that this is strictly associative and unital.  Vertical composition of 2-cells is as in $B$, and this is strictly associative and unital since vertical composition of 2-cells in a bicategory is always strict in this way.

For horizontal composition of 2-cells, note that coherence for bicategories implies that there is a unique coherence isomorphism
\[
e(f_{n}\cdots f_{1})*e(g_{m}\cdots g_{1}) \cong e(f_{n}\cdots f_{1} g_{m} \cdots g_{1}).
\]
Thus we can now define the horizontal composition $\alpha * \beta$ in $\textrm{st}B$ as the composite
\[
\begin{array}{rcl}
e(f_{n}\cdots f_{1}g_{m}\cdots g_{1})  & \cong  & e(f_{n}\cdots f_{1})e(g_{m}\cdots g_{1}) \\
& \stackrel{\alpha * \beta}{\longrightarrow}  & e(f'_{n}\cdots f'_{1})e(g'_{m}\cdots g'_{1}) \\
& \cong  & e(f'_{n}\cdots f'_{1}g'_{m}\cdots g'_{1})
\end{array}
\]
in $B$, where the unlabeled isomorphisms are the unique coherence isomorphisms mentioned above.  The uniqueness of these isomorphisms ensures that this definition satisfies the middle-four interchange laws as well as being strictly associative and unital.  This completes the proof that $\textrm{st}B$ is a 2-category.

Now we will finish proving that $e: \textrm{st}B \rightarrow B$ is a functor, and in particular a bijective-on-objects biequivalence.  By construction, $e$ is functorial on vertical composition of 2-cells.  The unit isomorphism is an invertible 2-cell $I_{a} \Rightarrow  e(\varnothing_{a})$, so we define it to be the identity.  The constraint cell for composition is an isomorphism
\[
e(f_{n}\cdots f_{1})*e(g_{m} \cdots g_{1}) \cong e(f_{n}\cdots f_{1}g_{m}\cdots g_{1})
\]
which once again is the unique such isomorphism given by coherence for bicategories.  Coherence also implies that the functor axioms hold.  To show that $e$ is a bijective-on-objects biequivalence, we only have to show that it is locally an equivalence of categories.  But by definition, it is locally full and faithful since $B$ and $\textrm{st}B$ have the same 2-cells.  This functor is also locally essentially surjective, as every 1-cell in $B$ is the image under $e$ of a length 1 string.

We will also need the biequivalence $f:B \rightarrow \textrm{st}B$ in order to construct the 2-adjunction between $\mb{Icon}$ and $\mb{Gray}_{icon}$.  The functor $f$ is defined to be the identity on objects, to include every 1-cell as a string of length one, and to be the identity function on 2-cells.  It is then simple to show that $f$ is a bijective-on-objects biequivalence.  It should be noted that $ef = 1_{B}$, and $fe$ is biequivalent to $1_{\textrm{st}B}$ in $\mathbf{Bicat}(\textrm{st}B, \textrm{st}B)$ by a transformation whose components on objects can all be taken to be identities and whose components on 1-cells all come from coherence; this in fact shows that $fe$ is isomorphic to $1_{\textrm{st}B}$ in $\mathbf{Icon}$, proving that every bicategory is \textit{equivalent} to a 2-category in $\mathbf{Icon}$.  This result was first noted by Lack and Paoli in \cite{lp}.

We will now use the coherence theorem for functors to construct a 2-functor $\textrm{st}F: \textrm{st}B \rightarrow \textrm{st}B'$ given a functor $F: B \rightarrow B'$ between bicategories.  We define $\textrm{st}F : \textrm{st}B \rightarrow \textrm{st}B'$ as follows.  On 0-cells, $\textrm{st}F$ agrees with $F$.  On 1-cells, we define
\[
\textrm{st}F(f_{n} \cdots f_{1}) = Ff_{n} \cdots Ff_{1},
\]
and $\textrm{st}F(\varnothing_{a}) = \varnothing_{Fa}$.  Now let $\alpha:e(f_{n} \cdots f_{1}) \Rightarrow e(g_{m} \cdots g_{1})$ be a 2-cell in $\textrm{st}B$.  Then we define $\textrm{st}F(\alpha)$ to be the 2-cell
\[
e(Ff_{n} \cdots Ff_{1}) \cong F\Big( e(f_{n} \cdots f_{1}) \Big) \stackrel{F\alpha}{\longrightarrow} F\Big( e(g_{m} \cdots g_{1}) \Big) \cong e(Fg_{m} \cdots Fg_{1}),
\]
where the unlabeled isomorphisms are the unique isomorphism 2-cells provided by coherence for functors.

One then uses coherence for functors to show that this is a strict functor, and that $\textrm{st}(F \circ G) = \textrm{st}F \circ \textrm{st}G$. The commutativity of the square
\[
\xy
{\ar^{F} (0,0)*+{X}; (20,0)*+{Y} };
{\ar_{f} (0,0)*+{X}; (0,-12)*+{\textrm{st} X} };
{\ar^{f} (20,0)*+{Y}; (20,-12)*+{\textrm{st} Y} };
{\ar_{\textrm{st} F} (0,-12)*+{\textrm{st} X}; (20,-12)*+{\textrm{st} Y} }
\endxy
\]
is immediate from the definitions.  It is not the case that $F \circ e = e \circ \textrm{st}F$, but there is an invertible icon $\omega$ between these with each component given by the unique coherence 2-cell; in particular, this equation does hold when $X,Y$ are 2-categories and $F$ is a 2-functor, as the unique coherence 2-cell is necessarily an identity.

\begin{thm}\label{stleft2adj}
The assignment $B \mapsto \textrm{st}B$ can be extended to a 2-functor
\[
\textrm{st}:\mathbf{Icon} \rightarrow \mathbf{Gray}_{icon}.
\]
This 2-functor is the left 2-adjoint to the inclusion $\mathbf{Gray}_{icon} \hookrightarrow \mathbf{Icon}$.
\end{thm}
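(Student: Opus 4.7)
The plan has three stages. First, I extend $\textrm{st}$ to icons. Given an icon $\alpha \colon F \Rightarrow G$ between $F, G \colon B \to B'$, the 2-functors $\textrm{st}F$ and $\textrm{st}G$ agree on objects since $F, G$ do; for a 1-cell $(f_n \cdots f_1)$ of $\textrm{st}B$, I define the component $(\textrm{st}\alpha)_{(f_n \cdots f_1)}$ to be the horizontal composite $\alpha_{f_n} * \cdots * \alpha_{f_1}$ in $B'$, left-parenthesized to match the definition of $e$, and the identity 2-cell on the empty string. The icon unit axiom is immediate since $\textrm{st}F$ and $\textrm{st}G$ are strict on units. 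The icon composition axiom reduces to $(\textrm{st}\alpha)_{u*v} = (\textrm{st}\alpha)_u * (\textrm{st}\alpha)_v$, because $\textrm{st}F$ and $\textrm{st}G$ are 2-functors; unwinding the coherence-mediated horizontal composition in $\textrm{st}B'$ turns this into the naturality of the unique coherence isomorphism in $B'$ with respect to the 2-cells $\alpha_{f_i}$, which holds by coherence for bicategories. Preservation of identities, vertical composition, and whiskering of icons is then a componentwise check that reduces to the corresponding properties of horizontal composition in the target bicategory.

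Second, I take as unit $\eta_B := f_B \colon B \to \textrm{st}B$ and as counit $\varepsilon_X := e_X \colon \textrm{st}X \to X$. When $X$ is a 2-category, $e_X$ is a strict 2-functor because all its coherence constraints, uniquely determined coherence 2-cells in the 2-category $X$, must equal identities. Naturality of $\eta$ on 1-cells is the commuting square $\textrm{st}F \circ f_B = f_{B'} \circ F$ already noted in the text, and naturality of $\varepsilon$ is the dual equation $e_{X'} \circ \textrm{st}H = H \circ e_X$, which holds when $X, X'$ are 2-categories and $H$ is a 2-functor. 2-naturality on icons reduces to two componentwise identities: for $\eta$, the component $(\textrm{st}\alpha)_{(h)}$ at a length-one string $(h) = f_B(h)$ is just $\alpha_h$; for $\varepsilon$, the icon composition axiom for $\alpha$ between 2-functors gives $\alpha_{e(f_n \cdots f_1)} = \alpha_{f_n} * \cdots * \alpha_{f_1}$, which is exactly what $e$ extracts from $(\textrm{st}\alpha)_{(f_n \cdots f_1)}$.

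Third, the triangle identities. The identity $e_X \circ f_X = 1_X$ for a 2-category $X$ is immediate from the definitions of $e$ and $f$ and is already noted in the construction. The identity $e_{\textrm{st}B} \circ \textrm{st}f_B = 1_{\textrm{st}B}$ is verified by tracing cells: a 1-cell $(f_n \cdots f_1)$ in $\textrm{st}B$ is sent by $\textrm{st}f_B$ to the string-of-strings $((f_n)) \cdots ((f_1))$ in $\textrm{st}(\textrm{st}B)$, and $e_{\textrm{st}B}$ collapses this to its concatenation in $\textrm{st}B$, which is $(f_n \cdots f_1)$ since concatenation is strictly associative. The analogous check on 2-cells is similar, the coherence isomorphisms appearing in the definition of $\textrm{st}f_B$ collapsing to identities because they live in the 2-category $\textrm{st}B$.

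The main obstacle is the verification of the icon composition axiom for $\textrm{st}\alpha$ in the first step, as this is the one place where coherence for bicategories does real work; it requires reconciling the naive horizontal composite $\alpha_{f_n} * \cdots * \alpha_{g_1}$ lying over the concatenated string $u * v$ with the coherence-mediated horizontal composite of $(\textrm{st}\alpha)_u$ and $(\textrm{st}\alpha)_v$ in $\textrm{st}B'$. After that step the rest of the argument is bookkeeping.
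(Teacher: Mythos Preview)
Your proposal is correct and follows essentially the same approach as the paper: define $\textrm{st}\alpha$ componentwise as the horizontal composite of the $\alpha_{f_i}$, take $f$ as unit and $e$ as counit, and verify 2-naturality and the triangle identities directly. You are in fact more careful than the paper about the icon composition axiom for $\textrm{st}\alpha$, which the paper dismisses as immediate; your identification of this as the one place where coherence does real work is accurate.
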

\begin{proof}
To give $B \mapsto \textrm{st}B$ the structure of a 2-functor, we must define it on higher cells.  We have given the construction of $\textrm{st}F$ above, so let $\alpha:F \rightarrow G$ be an icon between functors.  Since $\textrm{st}F(b) = F(b)$, we have that $\textrm{st}F$ and $\textrm{st}G$ agree on objects, so we can define icons between them.  Let $f_{1} \cdots f_{n} $ be a 1-cell in $\textrm{st}B$ from $a$ to $b$.  Then
\[
\textrm{st}F\Big( f_{1} \cdots f_{n}  \Big) = Ff_{1} Ff_{2} \cdots Ff_{n}
\]
by definition, so we define $\textrm{st}\alpha_{ f_{1} \cdots f_{n}  }$ to be the 2-cell represented by $\alpha_{f_{1}}* \cdots *\alpha_{f_{n}}$.  For the empty string from $a$ to $a$, define $\textrm{st}\alpha_{\varnothing} = 1_{I_{Fa}}$.  It is immediate that this is an icon $\textrm{st}\alpha: \textrm{st}F \Rightarrow \textrm{st}G$.

We have already noted that $\textrm{st}(FG) = \textrm{st}F \textrm{st}G$, and it is clear that $\textrm{st}(1_{B}) = 1_{\textrm{st}B}$.  We only must check that $\textrm{st}$ preserves horizontal composition of icons, or that
\[
\textrm{st}\alpha * \textrm{st}\beta = \textrm{st}(\alpha * \beta).
\]
Writing this down in terms of components, it becomes clear that this follows immediately from the definitions and interchange.  Thus $\textrm{st}:\mathbf{Icon} \rightarrow \mathbf{Gray}_{icon}$ is a 2-functor.

To prove that $\textrm{st}$ is the left 2-adjoint to the incluion $\mathbf{Gray}_{icon} \hookrightarrow \mathbf{Icon}$, we must give a unit and counit for the adjunction.  Denoting the inclusion by $i$, the counit would be a 2-natural transformation with components $\textrm{st} \, i (A) \rightarrow A$ for a 2-category $A$ and the unit would be a 2-natural transformation with components $B \rightarrow i \, \textrm{st} (B)$ for a bicategory $B$.  The counit is defined to be $e:\textrm{st}A \rightarrow A$, which is a 2-functor when $A$ is a 2-category, and the unit is defined to be $f:B \rightarrow \textrm{st}B$.  The 1-dimensional aspect of naturality for these components is contained in the remarks above, so we only need to check the 2-dimensional aspect.  For $f$, the component of $\textrm{st}\alpha * f$ at a 1-cell $r:x \rightarrow y$ is the 2-cell $\{ Fr \} \Rightarrow \{ Gr \}$ represented by $\alpha_{r}$, and it is trivial to show that the same 2-cell is the component of $f* \alpha$; the calculation for $e$ is analogous, confirming 2-naturality.

Finally, we must check the triangle identities.  One reduces to showing that
\[
A \stackrel{f}{\longrightarrow} \textrm{st} A \stackrel{e}{\longrightarrow} A
\]
is the identity for any 2-category $A$, which we have already noted holds.  The other involves showing that
\[
\textrm{st}B \stackrel{\textrm{st}f}{\longrightarrow} \textrm{st} \textrm{st} B \stackrel{e_{\textrm{st}B}}{\longrightarrow} \textrm{st}B
\]
is the identity for any bicategory $B$.  This is simple to check on cells, so it only remains to show that the constraint isomorphisms are identities, but this is straightforward to check from the definition.
\end{proof}

\section{Monoidal structure}

In this final section, we will prove that the strictification functor $\textrm{st}:\mb{Icon} \rightarrow \mb{Gray}_{icon}$ is monoidal at the level of bicategories.  It is for the proof of the monoidal structure that we require the use of icons, as indicated by Proposition \ref{stpsnat} below.  We begin by first showing that the functor st is \textit{not} monoidal in the 1-dimensional sense.  The following proposition originally appeared in \cite{gps}, although its statement there did not use the language of icons.  As a proof can be found in any of \cite{gps, gthesis, gbook}, we omit it here.

\begin{proposition} \label{bicatcompprop}
Let $X,Y$ be bicategories.  Then there exists a cubical functor
$\hat{\textrm{st}}: \textrm{st}X \times \textrm{st}Y \rightarrow \textrm{st}(X
\times Y)$
such that
\begin{enumerate}
\item $\hat{\textrm{st}}$ is the identity on objects and
\item there is an invertible icon $\zeta$ as pictured below.
\[
\xy
{\ar^{\hat{\textrm{st}}} (0,0)*+{\textrm{st}X \times \textrm{st}Y}; (15,15)*+{\textrm{st}(X \times Y)} };
{\ar_{e_{X} \times e_{Y}} (0,0)*+{\textrm{st}X \times \textrm{st}Y}; (30,0)*+{X \times Y} };
{\ar^{e_{X \times Y}} (15,15)*+{\textrm{st}(X \times Y)}; (30,0)*+{X \times Y} };
{\ar@{=>}^{\zeta} (15,8)*+{}; (15,2)*+{} }
\endxy
\]
\end{enumerate}
\end{proposition}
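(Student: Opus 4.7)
The plan is to construct $\hat{\textrm{st}}$ by specifying its value on objects and on the two ``axis'' families of 1-cells, then letting cubicality force its value on arbitrary pairs of strings; finally, I will define $\zeta$ componentwise by invoking coherence for bicategories.

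First, $\hat{\textrm{st}}$ is the identity on objects, and on axis 1-cells I set
\[
\hat{\textrm{st}}(f_n\cdots f_1,\varnothing_y) = (f_n,1_y)\cdots(f_1,1_y),\qquad \hat{\textrm{st}}(\varnothing_x,g_m\cdots g_1) = (1_x,g_m)\cdots(1_x,g_1),
\]
both regarded as strings in $\textrm{st}(X\times Y)$. An arbitrary 1-cell $(f_n\cdots f_1, g_m\cdots g_1)$ can be written as $(\varnothing,g_m\cdots g_1)*(f_n\cdots f_1,\varnothing)$; of the two possible orderings, only this one satisfies the cubical condition (for $i=2, j=1$, the entry $G_2=\varnothing$ is an identity), so cubicality forces
\[
\hat{\textrm{st}}(f_n\cdots f_1,g_m\cdots g_1) = (1,g_m)\cdots(1,g_1)(f_n,1)\cdots(f_1,1).
\]
For 2-cells, using that a 2-cell in any $\textrm{st}Z$ is by definition a 2-cell in $Z$ between the corresponding $e$-images, I would transport $(\alpha,\beta)$ through the unique coherence isomorphisms of $X\times Y$ between $e_{X\times Y}(\hat{\textrm{st}}(F,G))$ and $(e_X F, e_Y G)$, and similarly on the target. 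Verifying cubicality then reduces to observing that when $F=\varnothing$ or $G'=\varnothing$, the concatenation of the two $\hat{\textrm{st}}$-strings equals the $\hat{\textrm{st}}$-string of the concatenation on the nose; the 2-cell comparison is an identity because vertical composition in $\textrm{st}$ is strict.

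For the icon $\zeta:e_{X\times Y}\circ\hat{\textrm{st}}\Rightarrow e_X\times e_Y$, both functors agree on objects (both are the identity), and I define the component at a 1-cell $(F,G)$ to be the unique coherence 2-cell
\[
e_{X\times Y}(\hat{\textrm{st}}(F,G)) \Rightarrow (e_X(F),e_Y(G))
\]
in $X\times Y$ provided by the coherence theorem for bicategories: the two sides are iterated composites built from the same pair of 1-cell sequences (padded with identities on alternate sides), so coherence supplies a unique isomorphism. Invertibility is automatic, and the unit and composition icon axioms hold because they become equalities between coherence 2-cells with a shared source and target.

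The main obstacle is bookkeeping rather than conceptual: one must carefully track which string in $\textrm{st}(X\times Y)$ is produced by each construction and confirm that the chosen ordering convention ``all $(1,g_i)$'s to the left of all $(f_j,1)$'s'' makes the cubical decomposition match string concatenation on the nose. A cleaner alternative, which I would also mention, is to define a 2-functor $\textrm{st}X\otimes\textrm{st}Y\to\textrm{st}(X\times Y)$ on the generators of the Gray tensor product---sending $F\otimes 1$ and $1\otimes G$ as above, and sending the interchanger $\Sigma$ to the appropriate coherence 2-cell---and then invoke Theorem \ref{cubicalmulticat} to convert it into the cubical functor $\hat{\textrm{st}}$ automatically.
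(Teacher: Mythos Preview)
Your construction is correct and is precisely the standard one; the paper itself omits the proof entirely, referring the reader to \cite{gps, gthesis, gbook}, but immediately afterwards records the explicit formula
\[
\hat{\textrm{st}}\bigl(\{f_n,\ldots,f_1\},\{g_m,\ldots,g_1\}\bigr) = (1,g_m)(1,g_{m-1})\cdots(1,g_1)(f_n,1)\cdots(f_1,1),
\]
which is exactly what you derived by imposing cubicality on the axis decomposition. Your check of the cubical condition (that the comparison is an identity when $F=\varnothing$ or $G'=\varnothing$) is the right one, and your definition of $\zeta$ via unique coherence cells is also the standard argument. The alternative you mention---defining a 2-functor on Gray-tensor generators and invoking Theorem~\ref{cubicalmulticat}---is equivalent and is in fact how the paper later packages $\hat{\textrm{st}}$ as the 2-functor $\chi_{XY}$ in Proposition~\ref{chiequiv}.
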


For the next proposition, we will need to know explicitly what $\hat{\textrm{st}}$ does to 1-cells.  If we let $\{ f_{n}, \ldots, f_{1} \}$ be a 1-cell in $\textrm{st}X$ and let $\{ g_{m}, \ldots, g_{1} \}$ be a 1-cell in $\textrm{st}Y$, then the required formula is
\[
\hat{\textrm{st}} \left( \{ f_{n}, \ldots, f_{1} \}, \{ g_{m}, \ldots, g_{1} \} \right) = (1,g_{m})(1,g_{m-1}) \cdots (1, g_{1})(f_{n},1) \cdots (f_{1}, 1).
\]

\begin{proposition}\label{stpsnat}
The cubical functor $\hat{\textrm{st}}$
is natural in both variables, up to an invertible icon.
\end{proposition}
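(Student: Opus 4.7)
The plan is to produce, for each pair of functors $F:X \to X'$ and $G:Y \to Y'$, an invertible icon
\[
\Phi_{F,G}:\; \textrm{st}(F \times G) \circ \hat{\textrm{st}}_{X,Y} \;\Rightarrow\; \hat{\textrm{st}}_{X',Y'} \circ (\textrm{st}F \times \textrm{st}G)
\]
between these two cubical functors from $\textrm{st}X \times \textrm{st}Y$ to $\textrm{st}(X' \times Y')$.

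First I would verify by direct computation that both composites agree on 0- and 1-cells. On objects both give $(x,y) \mapsto (Fx, Gy)$. On a pair of 1-cells $(\{f_n,\ldots,f_1\},\{g_m,\ldots,g_1\})$, using the explicit action of $\hat{\textrm{st}}$ displayed just above the proposition together with the termwise action of $\textrm{st}F$, $\textrm{st}G$, and $\textrm{st}(F \times G)$ on strings, both composites yield the common string $(1,Gg_m)\cdots(1,Gg_1)(Ff_n,1)\cdots(Ff_1,1)$. Since each composite is cubical (being a composite of cubical functors with 2-functors), agreement on generating 1-cells is enough.

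Next I would construct $\Phi_{F,G}$ by testing against the biequivalence $e = e_{X' \times Y'}$ and splicing two invertible icons already in play in the paper: the icon $\zeta$ of Proposition \ref{bicatcompprop}, giving $e \circ \hat{\textrm{st}} \cong e \times e$, and the invertible icon $\omega:F \circ e_X \cong e_{X'} \circ \textrm{st}F$ (and its analog for $G$) noted in the paragraph preceding Theorem \ref{stleft2adj}, whose components are the canonical coherence 2-cells arising from $F$'s constraint cells. Splicing yields an invertible icon
\[
e \circ \textrm{st}(F \times G) \circ \hat{\textrm{st}}_{X,Y} \;\cong\; (F \times G) \circ (e_X \times e_Y) \;\cong\; e \circ \hat{\textrm{st}}_{X',Y'} \circ (\textrm{st}F \times \textrm{st}G)
\]
whose components at each 1-cell are canonical composites of coherence 2-cells in $X' \times Y'$. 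Because $e$ is locally the identity on 2-cells — every 2-cell of $\textrm{st}(X' \times Y')$ is literally a 2-cell of $X' \times Y'$ between the corresponding $e$-images — these components lift verbatim to 2-cells of $\textrm{st}(X' \times Y')$, providing the components of $\Phi_{F,G}$. Invertibility and the icon axioms (unit, composition, and naturality on 2-cells of the source) are reflected from the analogous properties of the post-$e$ chain, which hold automatically by construction.

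The step I expect to require the most care is the combinatorial bookkeeping in the chain: confirming that the spliced icons have matching sources and targets and that the composite components on a general 1-cell are exactly the canonical coherence 2-cells predicted by coherence for bicategories and for functors. This reduces to repeated invocations of those coherence theorems plus the identity $(F \times G)(f, g) = (Ff, Gg)$. Should one prefer to avoid the detour through $e$, the same equality may be checked directly by unpacking the definition of $\textrm{st}F$ on 2-cells (as $F\alpha$ conjugated by the component of $\omega$), unpacking the action of $\hat{\textrm{st}}$ on 2-cells via $\zeta$, and comparing the two composites on an arbitrary 2-cell $(\alpha,\beta)$; equality is once again a coherence calculation with essentially unique coherence 2-cells.
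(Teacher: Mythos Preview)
Your proposal contains a genuine error in its first step. You claim that both composites $\textrm{st}(F \times G) \circ \hat{\textrm{st}}_{X,Y}$ and $\hat{\textrm{st}}_{X',Y'} \circ (\textrm{st}F \times \textrm{st}G)$ agree on 1-cells, sending $(\{f_n,\ldots,f_1\},\{g_m,\ldots,g_1\})$ to the common string $(1,Gg_m)\cdots(1,Gg_1)(Ff_n,1)\cdots(Ff_1,1)$. This is false. The second composite does give that string, but the first does not: applying $\textrm{st}(F \times G)$ termwise to the string $(1,g_m)\cdots(1,g_1)(f_n,1)\cdots(f_1,1)$ yields
\[
(F1, Gg_m)\cdots(F1, Gg_1)(Ff_n, G1)\cdots(Ff_1, G1),
\]
and since $F$ and $G$ are merely weak functors, $F1$ and $G1$ are not identity 1-cells in general. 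The two strings therefore differ as 1-cells of $\textrm{st}(X' \times Y')$, and the icon must have nontrivial components relating them. This failure to commute on the nose is exactly the content of the proposition and of the Remark that follows it: it is precisely why $\textrm{st}$ is not lax monoidal as a functor between monoidal \emph{categories}.

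The paper records this discrepancy explicitly and defines the components of the icon directly as horizontal composites of the unit constraints $F1 \cong 1$ and $G1 \cong 1$. Your splicing route through $e$, $\zeta$, and $\omega$ is salvageable --- $e$ is locally a bijection on 2-cells, so icons between functors into $\textrm{st}(X' \times Y')$ do lift along it --- and tracing it through would in fact reproduce exactly those unit-constraint components. But the argument does not rest on, and is not helped by, your claimed agreement on 1-cells; icons require agreement only on objects. Once you see that the components are just unit constraints, it is quicker to write them down directly as the paper does.
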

\begin{proof}
Let $F:X \rightarrow X', G:Y \rightarrow Y'$ be functors between bicategories.  To prove the naturality of $\hat{\textrm{st}}$, we must construct an invertible icon in the square below.
\[
\xy
{\ar^{\hat{\textrm{st}}} (0,0)*+{\textrm{st}X \times \textrm{st}Y}; (40,0)*+{\textrm{st}(X \times Y)} };
{\ar^{\textrm{st}(F \times G)} (40,0)*+{\textrm{st}(X \times Y)}; (40,-25)*+{\textrm{st}(X' \times Y')} };
{\ar_{\textrm{st}F \times \textrm{st}G} (0,0)*+{\textrm{st}X \times \textrm{st}Y}; (0,-25)*+{\textrm{st}X' \times \textrm{st}Y'} };
{\ar_{\hat{\textrm{st}}} (0,-25)*+{\textrm{st}X' \times \textrm{st}Y'}; (40,-25)*+{\textrm{st}(X' \times Y')} };
(20,-12.5)*{\cong}
\endxy
\]

First, it is clear that these agree on objects since both functors $\hat{\textrm{st}}$ are the identity on objects and $\textrm{st}F$ agrees with $F$ on objects, so it is possible to define an icon between them.

By definition, $\hat{\textrm{st}}(f,g) = \hat{\textrm{st}}(\varnothing,g)*\hat{\textrm{st}}(f,\varnothing)$ for a 1-cell $(f,g)$ in $\textrm{st}X \times \textrm{st}Y$.  Thus the top composite cubical functor gives
\[
(F1, Gg_{m})(F1, Gg_{m-1})\cdots (F1, Gg_{1})(Ff_{n}, G1) \cdots (Ff_{1}, G1)
\]
while the composite along the left and bottom is
\[
(1, Gg_{m})(1, Gg_{m-1})\cdots (1, Gg_{1})(Ff_{n}, 1) \cdots (Ff_{1}, 1).
\]
We thus define the desired invertible icon as a horizontal composite of unit constraints for $F$ and $G$.  Naturality follows from the naturality of the associators in the bicategories $X', Y'$ together with the interchange law.  We leave the icon axioms to the reader, as they are entirely straightforward.
\end{proof}

\begin{remark}
The proof of the previous proposition shows that $\hat{\textrm{st}}$ is \textit{not} natural in the sense of giving a natural transformation in the standard, 1-categorical sense, but only up to an icon.  In particular, this shows that st is not a lax monoidal functor between monoidal \textit{categories}.
\end{remark}

In order to avoid complicated computations, we will employ the theory of doctrinal adjunction as in \cite{k}.  We begin by recalling the basic result.  Let $K$ be a 2-category, and $T$ a 2-monad on $K$.  Assume we have $T$-algebras $x,y$, and assume we have an adjunction $f \dashv u$ in $K$ with $f:x \rightarrow y, u: y \rightarrow x$.  Then there is a bijection, given by taking mates, between data making $u$ into a lax $T$-algebra morphism and data making $f$ into a oplax $T$-algebra morphism.  In our case, the relevant 2-category is $\mb{2CAT}$, the 2-category of (potentially large) 2-categories, 2-functors, and 2-natural transformations, and the 2-monad $T$ is the free strongly monoidal 2-category 2-monad.  We have algebras $\mb{Icon}$ and $\mb{Gray}_{icon}$, and a 2-adjunction between them.  We begin employing the theory of doctrinal adjunction by giving a lax structure on $i$ in the following proposition.

\begin{proposition}
The 2-functor $i: \mb{Gray}_{icon} \hookrightarrow \mb{Icon}$ is $\mb{Cat}$-lax monoidal.
\end{proposition}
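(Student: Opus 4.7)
The plan is to equip $i$ with the $\mb{Cat}$-lax structure coming from the universal cubical functors of Theorem \ref{cubicalmulticat}. Specifically, take $\chi_{X,Y} : iX \times iY \to i(X \otimes Y)$ to be the universal cubical functor $c_{X,Y}$, which is a 2-functor and hence a 1-cell in $\mb{Icon}$; and take $\iota$ to be the identity 2-functor of the terminal 2-category, which is simultaneously the unit for the cartesian structure on $\mb{Icon}$ and for the Gray structure on $\mb{Gray}_{icon}$.

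Naturality splits into two checks. For 1-dimensional naturality, given 2-functors $F : X \to X'$ and $G : Y \to Y'$, the equation $(F \otimes G) \circ c_{X,Y} = c_{X',Y'} \circ (F \times G)$ is precisely the naturality of $c$ asserted in Theorem \ref{cubicalmulticat}. For 2-dimensional naturality, given icons $\alpha : F \Rightarrow F'$ and $\beta : G \Rightarrow G'$ in $\mb{Gray}_{icon}$, I must verify that the icons $(\alpha \otimes \beta) * 1_c$ and $1_c * (\alpha \times \beta)$ coincide. By Lemma \ref{cubicalicons} it suffices to compare their components on the generating 1-cells $(f,1)$ and $(1,g)$ of $X \times Y$; on $(f,1)$ both components reduce to $\alpha_f \otimes 1$, using the component formula for $\alpha \otimes \beta$ from Proposition \ref{Graysmon} on one side and the action of $c$ on the 2-cell $(\alpha_f, 1)$ on the other, and the case $(1,g)$ is symmetric.

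It remains to verify the pentagon and two unit axioms for a $\mb{Cat}$-lax monoidal functor. The key simplification is that the monoidal structure on $\mb{Icon}$ is cartesian and hence strictly associative and unital, so the domain-side associator and unitors are identities; combined with $\iota = 1$, the axioms collapse to the single equality
\[
\mu \circ c_{X \otimes Y, Z} \circ (c_{X, Y} \times 1) = c_{X, Y \otimes Z} \circ (1 \times c_{Y, Z})
\]
for the pentagon, and analogous equations involving the Gray unitors for the unit axioms. Each of these is exactly the defining property of the Gray associator or unitor as the unique 2-functor factoring the relevant multi-cubical functor through the universal property of $c$. I expect the main obstacle to be the 2-dimensional naturality check, as it requires carefully matching the component formulas for $\alpha \otimes \beta$ from Proposition \ref{Graysmon} with the action of the cubical functor $c$ on 2-cells, but with those formulas in hand the verification is a direct unpacking.
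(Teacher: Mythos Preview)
Your approach is the same as the paper's: define $\chi$ via the universal cubical functor $c$, take $\iota$ to be the identity, invoke Lemma~\ref{cubicalicons} for 2-naturality, and reduce the associativity axiom to the universal characterization of the Gray associator. The overall strategy is sound and matches the paper closely.

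There is, however, one factual slip worth correcting. You write that $c_{X,Y}$ ``is a 2-functor and hence a 1-cell in $\mb{Icon}$''. This is false: the universal cubical functor $c:X\times Y\to X\otimes Y$ is \emph{not} a 2-functor---its composition constraint at $\big((f,1),(1,g)\big)$ is precisely the non-identity 2-cell $\Sigma_{f,g}$. It is a genuine pseudofunctor, which is exactly why $\chi$ lands in $\mb{Icon}$ rather than in a 2-category of 2-functors, and why the whole story cannot be told at the level of monoidal \emph{categories}. The conclusion that $c$ is a 1-cell of $\mb{Icon}$ is correct, but the justification should be that the 1-cells of $\mb{Icon}$ are pseudofunctors (Definition~\ref{deficon2cat}), not that $c$ is strict. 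A secondary point: the cartesian monoidal structure on $\mb{Icon}$ is not literally strict, so the associativity axiom still carries the product associator $(X\times Y)\times Z\cong X\times(Y\times Z)$ on one side; this is harmless but should not be suppressed. With these adjustments the argument goes through as in the paper.
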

\begin{proof}
Since $i$ sends the unit object in $\mb{Gray}_{icon}$ to the unit object in $\mb{Icon}$, we must give a 2-natural transformation with components $iA \times iB \rightarrow i(A \otimes B)$ and then check the lax functor axioms.  Now $A,B$ are 2-categories, but the components above are in $\mb{Icon}$, hence are pseudofunctors; thus we define these to be the universal cubical functors $c:A \times B \rightarrow A \otimes B$.  The proof that these components give a 2-natural transformation is a consequence of Lemma \ref{cubicalicons}.  There are now two unit axioms and one associativity axiom to check.

The left unit axiom is the commutativity of the diagram below, which is trivial to check.
\[
\xy
{\ar^{c} (0,0)*+{1 \times A}; (30,0)*+{1 \otimes A} };
{\ar^{\cong} (30,0)*+{1 \otimes A}; (30,-15)*+{A} };
{\ar_{\cong} (0,0)*+{1 \times A}; (30,-15)*+{A} };
\endxy
\]
The right unit axiom is analogous.  The associativity axiom is the commutativity of the following diagram.
\[
\xy
{\ar^{c \times 1} (0,0)*+{(A \times B) \times C}; (40,0)*+{(A \otimes B) \times C} };
{\ar^{c} (40,0)*+{(A \otimes B) \times C}; (80,0)*+{(A \otimes B) \otimes C} };
{\ar^{\cong} (80,0)*+{(A \otimes B) \otimes C}; (80,-20)*+{A \otimes (B \otimes C)} };
{\ar_{\cong} (0,0)*+{(A \times B) \times C}; (0,-20)*+{A \times (B \times C)} };
{\ar_{1 \times c} (0,-20)*+{A \times (B \times C)}; (40,-20)*+{A \times (B \otimes C)} };
{\ar_{c} (40,-20)*+{A \times (B \otimes C)}; (80,-20)*+{A \otimes (B \otimes C)} };
\endxy
\]
This diagram clearly commutes on objects, and it is easy to check that it commutes on 1- and 2-cells which have only a single non-identity component.  Since both composites are cubical, this implies that they agree on all 1- and 2-cells.  The constraints for both are given as composites of the canonical isomorphisms
\[
(f \otimes 1) \circ (1 \otimes g) \cong (1 \otimes g) \circ (f \otimes 1)
\]
or as identities, and once again it is straightforward to verify that the Gray tensor product axioms force these to be equal.
\end{proof}

Thus we have the following immediate consequence.

\begin{corollary}\label{oplaxst}
The 2-functor $\textrm{st}: \mb{Icon} \rightarrow \mb{Gray}_{icon}$ is $\mb{Cat}$-oplax monoidal.
\end{corollary}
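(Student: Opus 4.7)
The plan is to invoke doctrinal adjunction directly. Taking $K = \mb{2CAT}$ and $T$ the free strongly monoidal 2-category 2-monad on $K$, the strict $T$-algebras are exactly the strongly monoidal 2-categories; the lax $T$-algebra morphisms are the $\mb{Cat}$-lax monoidal functors and the oplax $T$-algebra morphisms are the $\mb{Cat}$-oplax monoidal functors (as recorded in the remark after Definition of $\mb{Cat}$-oplax). By Theorem \ref{stleft2adj} we have a 2-adjunction $\textrm{st} \dashv i$ in $K$ between the $T$-algebras $\mb{Icon}$ and $\mb{Gray}_{icon}$, and by the preceding proposition the right adjoint $i$ has been equipped with a $\mb{Cat}$-lax monoidal structure. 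Kelly's bijection between lax structures on the right adjoint and oplax structures on the left adjoint therefore transports this to a $\mb{Cat}$-oplax monoidal structure on $\textrm{st}$, and the result follows.

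For completeness of the record (and for later use in proving the main theorem), I would record the explicit formula produced by taking mates, using the formula from the proof of Lemma \ref{oplaxtomonoidal1}. The oplax constraint $\psi_{X,Y}:\textrm{st}(X \times Y) \to \textrm{st}X \otimes \textrm{st}Y$ is the composite
\[
\textrm{st}(X \times Y) \xrightarrow{\textrm{st}(f \times f)} \textrm{st}(i\,\textrm{st}X \times i\,\textrm{st}Y) \xrightarrow{\textrm{st}\,c} \textrm{st}\,i(\textrm{st}X \otimes \textrm{st}Y) \xrightarrow{e} \textrm{st}X \otimes \textrm{st}Y,
\]
and similarly the unit constraint $\psi:\textrm{st}(1) \to 1$ is forced by the fact that the unit of $\mb{Icon}$ (the terminal bicategory) is already a 2-category so $i$ preserves it strictly. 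The 2-naturality in $(X,Y)$ of $\psi_{X,Y}$ is immediate since every factor above is 2-natural.

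The only step that requires any thought is checking that the hypotheses of doctrinal adjunction are in fact met in the sense used here, namely that $\textrm{st} \dashv i$ is a genuine 2-adjunction in $\mb{2CAT}$ (which is supplied by Theorem \ref{stleft2adj}) and that $i$ has been made into a $\mb{Cat}$-lax monoidal functor (supplied by the previous proposition). The verification that the three $\mb{Cat}$-oplax coherence diagrams for $\textrm{st}$ then follow automatically by passage to mates is exactly the content of doctrinal adjunction, so no separate diagram-chase is needed. I do not expect any real obstacle; the whole corollary is simply the formal consequence of the lax structure on $i$ together with the 2-adjunction, and the main virtue of phrasing it this way is that in the next section the explicit composite displayed above (rather than an ad-hoc construction) is what allows Lemmas \ref{oplaxtomonoidal1} and \ref{oplaxtomonoidal2} to be applied to upgrade the oplax structure to a genuine monoidal structure.
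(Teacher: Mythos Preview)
Your proposal is correct and takes essentially the same approach as the paper: the paper sets up doctrinal adjunction in $\mb{2CAT}$ for the free strongly monoidal 2-category 2-monad immediately before proving that $i$ is $\mb{Cat}$-lax monoidal, and then states this corollary as an immediate consequence with no further proof. Your explicit mate formula for $\psi_{X,Y}$ also agrees with what the paper uses later (it is the 2-functor $\overline{\chi}$ appearing in the proof of Proposition~\ref{chiequiv} and invoked in Theorem~\ref{stmonoidal}).
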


We need one final preliminary result before proving the main theorem.

\begin{proposition}\label{chiequiv}
The strict 2-functor $\chi_{XY}:\textrm{st}X \otimes \textrm{st}Y \rightarrow \textrm{st}(X
\times Y)$ induced by $\hat{\textrm{st}}$ is an internal equivalence in $\mathbf{Gray}_{icon}$.
\end{proposition}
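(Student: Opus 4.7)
The plan is to show that $\chi_{XY}$ is a bijective-on-objects biequivalence between $2$-categories, and then to construct an explicit $2$-functor pseudo-inverse $\psi$ using the $2$-adjunction $\textrm{st}\dashv i$ of Theorem \ref{stleft2adj}. First, by Proposition \ref{bicatcompprop} the invertible icon $\zeta$ identifies $e_{X\times Y}\circ\hat{\textrm{st}}$ with the biequivalence $e_X\times e_Y$, and since $e_{X\times Y}$ is itself a biequivalence, two-out-of-three for biequivalences forces $\hat{\textrm{st}}$ to be a biequivalence. Lemma \ref{universalboobieq} gives that the universal cubical functor $c$ is a bijective-on-objects biequivalence, and since $\chi_{XY}\circ c=\hat{\textrm{st}}$, another application of two-out-of-three yields that $\chi_{XY}$ is a biequivalence; it is identity on objects because $c$ and $\hat{\textrm{st}}$ are. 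In particular $\chi_{XY}$ admits some pseudofunctor pseudo-inverse $\psi''$ in $\mathbf{Icon}$.

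To upgrade this to an internal equivalence in $\mathbf{Gray}_{icon}$, I would construct a strict $2$-functor pseudo-inverse $\psi$ as follows. Consider the pseudofunctor $\psi':=c\circ(f_X\times f_Y)\colon X\times Y\to\textrm{st}X\otimes\textrm{st}Y$ and let $\psi\colon\textrm{st}(X\times Y)\to\textrm{st}X\otimes\textrm{st}Y$ be its $2$-functor transpose across the adjunction, so that $\psi\circ f_{X\times Y}=\psi'$. To verify $\chi_{XY}\circ\psi\cong 1_{\textrm{st}(X\times Y)}$ in $\mathbf{Gray}_{icon}$, the $2$-adjunction identifies icons between $2$-functors $\textrm{st}(X\times Y)\to\textrm{st}(X\times Y)$ with icons between pseudofunctors $X\times Y\to\textrm{st}(X\times Y)$, so it suffices to exhibit an invertible icon $\hat{\textrm{st}}\circ(f_X\times f_Y)\cong f_{X\times Y}$. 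These pseudofunctors agree on objects, and on a $1$-cell $(\phi_1,\phi_2)$ they produce the length-two string $(1,\phi_2)(\phi_1,1)$ versus the length-one string $\{(\phi_1,\phi_2)\}$, which by construction of $\textrm{st}$ have equal images in $X\times Y$ up to canonical unit constraints; coherence for bicategories supplies the unique invertible $2$-cells as icon components, and the icon axioms follow from coherence.

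For the reverse composite $\psi\circ\chi_{XY}\cong 1_{\textrm{st}X\otimes\textrm{st}Y}$, I would invoke the standard $2$-categorical fact that a one-sided pseudo-inverse of an internal equivalence is automatically two-sided: using the pseudofunctor pseudo-inverse $\psi''$ from the first paragraph and the icon $\chi_{XY}\psi\cong 1$ just constructed, the composite $\psi\chi_{XY}\cong \psi''\chi_{XY}\psi\chi_{XY}\cong\psi''\chi_{XY}\cong 1$ produces the required invertible icon, which lies in $\mathbf{Gray}_{icon}$ because $\psi\chi_{XY}$ and the identity are both $2$-functors. The main obstacle is the coherence identification in the second paragraph; while only length-one and length-two strings need to be compared via unit coherence, verifying naturality and the icon axioms in detail requires careful bookkeeping of the coherence isomorphisms built into $\textrm{st}$.
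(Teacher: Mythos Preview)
Your proposal is correct and follows essentially the same approach as the paper: both define the pseudo-inverse as the adjoint transpose of $c\circ(f_X\times f_Y)$, verify one of the two composites is isomorphic to the identity, and then invoke uniqueness of pseudo-inverses for the other. The only tactical difference is which composite is checked directly and how: the paper verifies $\overline{\chi}\chi\cong 1$ by reducing via Lemma~\ref{cubicalicons} to a computation with $c$ and the icon $\zeta$, whereas you verify $\chi\psi\cong 1$ by using the hom-category isomorphism of the $2$-adjunction to reduce to exhibiting an invertible icon $\hat{\textrm{st}}\circ(f_X\times f_Y)\cong f_{X\times Y}$; both reductions are valid and lead to the same kind of coherence verification.
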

\begin{proof}
We must produce a 2-functor $\overline{\chi}: \textrm{st}(X \times Y) \rightarrow \textrm{st}X \otimes \textrm{st}Y$ and invertible icons $\overline{\chi} \chi \cong 1, \chi \overline{\chi} \cong 1$.  Now by Proposition \ref{bicatcompprop}, we get that $\hat{\textrm{st}}$ is a bijective-on-objects biequivalence because both $e_{X \times Y}$ and $e_{X} \times e_{Y}$ are, and bijective-on-objects biequivalence satisfy the 2-out-of-3 property in the following sense:  if $fg \cong h$ in $\mathbf{Icon}$, then all three functors are bijective-on-objects biequivalences if two of them are.  This shows, by Lemma \ref{cubicalicons}, that $\chi$ is also a bijective-on-objects biequivalence.

To construct the 2-functor $\overline{\chi}$, we will use Theorem \ref{stleft2adj}.  The 2-functor
\[
\overline{\chi}: \textrm{st}(X \times Y) \rightarrow \textrm{st}X \otimes \textrm{st}Y
\]
determines, and is determined by, a functor $j:X \times Y \rightarrow \textrm{st}X \otimes \textrm{st}Y$ such that $\overline{\chi} f_{X \times Y} = j$.  Define the functor $j$ to be the composite below.
\[
X \times Y \stackrel{f_{X} \times f_{Y}}{\longrightarrow} \textrm{st}X \times \textrm{st}Y \stackrel{c}{\longrightarrow} \textrm{st}X \otimes \textrm{st}Y
\]
By Lemma \ref{universalboobieq}, $c$ is a bijective-on-objects biequivalence, and $f_{X} \times f_{Y}$ is as well.  Once again by the 2-out-of-3 property, $\overline{\chi}$ is then a bijective-on-objects biequivalence.

Now bijective-on-objects biequivalences are the internal equivalences in $\mathbf{Icon}$, and a 2-functor in $\mathbf{Gray}_{icon}$ is an internal equivalence if and only if it is an internal equivalence in $\mathbf{Icon}$ and there is a pseudoinverse which is a 2-functor.  This implies that both $\chi$ and $\overline{\chi}$ are internal equivalences in $\mathbf{Icon}$.  If we show that $\overline{\chi} \chi \cong 1$, then by the uniqueness (up to isomorphism) of pseudoinverses we will get that $\chi \overline{\chi} \cong 1$ as well.  Once again by Lemma \ref{cubicalicons}, to show that $\overline{\chi} \chi \cong 1$, we will prove that $\overline{\chi} \chi c \cong c$.  By the definition of $\overline{\chi}$, we have $\overline{\chi} f_{X \times Y} = c (f_{X} \times f_{Y})$, so
\[
\overline{\chi} \cong \overline{\chi} f_{X \times Y} e_{X \times Y} = c (f_{X} \times f_{Y})e_{X \times Y}.
\]
Thus we have the following calculation which finishes the proof.
\[
\begin{array}{rcl}
\overline{\chi}\chi c & \cong & c (f_{X} \times f_{Y}) e_{X \times Y} \chi c \\
& = & c (f_{X} \times f_{Y}) e_{X \times Y} \hat{\textrm{st}} \\
& \stackrel{11\zeta}{\cong} & c (f_{X} \times f_{Y}) (e_{X} \times e_{Y}) \\
& \cong & c
\end{array}
\]
\end{proof}

\begin{thm}\label{stmonoidal}
The 2-functor $\textrm{st}:\mathbf{Icon} \rightarrow \mathbf{Gray}_{icon}$ can be given the structure of a  symmetric monoidal functor between the underlying monoidal bicategories.
\end{thm}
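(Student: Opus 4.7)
The plan is to apply Lemma \ref{oplaxtomonoidal2} to the $\mathbf{Cat}$-oplax monoidal structure on $\textrm{st}$ provided by Corollary \ref{oplaxst}. That lemma delivers a monoidal functor structure on the underlying monoidal bicategories once each oplax constraint 1-cell is shown to be an internal equivalence in $\mathbf{Gray}_{icon}$, and the symmetric refinement will follow by reducing the symmetric hexagon to an equation on $\textrm{st}X \times \textrm{st}Y$ via the universal cubical functor.

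The mate formula in the proof of Lemma \ref{oplaxtomonoidal1}, applied to the adjunction $\textrm{st} \dashv i$ of Theorem \ref{stleft2adj} and the lax structure $c$ on $i$, identifies the binary oplax constraint $\psi_{XY}: \textrm{st}(X \times Y) \to \textrm{st}X \otimes \textrm{st}Y$ as the composite $e \circ \textrm{st}(c) \circ \textrm{st}(f_X \times f_Y)$, which is precisely the 2-functor $\overline{\chi}_{XY}$ constructed in Proposition \ref{chiequiv}. That proposition exhibits $\overline{\chi}_{XY}$ as an internal equivalence in $\mathbf{Gray}_{icon}$ with pseudo-inverse $\chi_{XY}$. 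The unit constraint is handled analogously, and more easily, since the terminal bicategory serves as the unit for both cartesian product on $\mathbf{Icon}$ and the Gray tensor product on $\mathbf{Gray}_{icon}$. Lemma \ref{oplaxtomonoidal2} then produces the monoidal functor structure on the underlying monoidal bicategories, with constraint pseudonatural transformations given by $\chi$.

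For the symmetric refinement, the remark following Proposition \ref{Graysmon} equips both $\mathbf{Icon}$ and $\mathbf{Gray}_{icon}$ with $\mathbf{Cat}$-enriched symmetric structure whose symmetries are 2-natural, so the underlying monoidal bicategories are symmetric monoidal. What remains is an invertible modification witnessing compatibility of $\chi$ with the two symmetries. By Lemma \ref{cubicalicons}, it suffices to verify the relevant icon equation after composing with $c : \textrm{st}X \times \textrm{st}Y \to \textrm{st}X \otimes \textrm{st}Y$; on the cartesian product the Gray symmetry pulls back to the ordinary swap, and the equation follows from the symmetry apparent in the explicit formula for $\hat{\textrm{st}}$ displayed before Proposition \ref{stpsnat}.

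The main obstacle is the symmetric hexagon, since every other step is a direct invocation of a previously established lemma. The difficulty is that the Gray symmetry mixes in the interchange cells $\Sigma_{f,g}$; reducing along $c$ turns this into a check on the cartesian product, where the symmetry is trivial and the verification reduces to interchange bookkeeping together with coherence 2-cells already appearing in the oplax structure.
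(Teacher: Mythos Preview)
Your treatment of the monoidal structure is essentially the paper's own: you invoke Corollary \ref{oplaxst}, identify the oplax constraints with $\overline{\chi}$ via the mate formula of Lemma \ref{oplaxtomonoidal1}, appeal to Proposition \ref{chiequiv} for the equivalence, and then apply Lemma \ref{oplaxtomonoidal2}. That part is fine.

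The gap is in the symmetric (and implicitly braided) refinement. You correctly note that an invertible modification $U$ is required, but you then speak of ``verifying the relevant icon equation'' and appeal to ``the symmetry apparent in the explicit formula for $\hat{\textrm{st}}$''. There is no such symmetry: the displayed formula sends $(\{f_i\},\{g_j\})$ to the string $(1,g_m)\cdots(1,g_1)(f_n,1)\cdots(f_1,1)$, with all the $g$'s preceding all the $f$'s. Chasing the square you must fill, the two composites send $(f,g)$ to the distinct strings $(1,f)(g,1)$ and $(g,1)(1,f)$ in $\textrm{st}(Y\times X)$, so the square does \emph{not} commute on 1-cells and the modification $U$ has genuinely non-identity components. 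Nothing in the oplax structure supplies these components; the paper constructs them as the unique coherence isomorphisms between $e\big((1,f)(g,1)\big)$ and $e\big((g,1)(1,f)\big)$ in the bicategory $Y\times X$.

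Having actually constructed $U$, one must still check the two braided monoidal functor axioms and the single symmetry axiom. The paper's method is to observe that, after applying the biequivalence $e$, every cell in each pasting is either an identity or a coherence isomorphism, so the coherence theorem for bicategories forces equality; since $e$ is locally faithful this reflects back to $\textrm{st}(X\times Y)$. Your ``interchange bookkeeping together with coherence 2-cells already appearing in the oplax structure'' does not capture this: the cells needed are new coherence cells in the target bicategory, and the argument that the axioms hold is a coherence-theorem argument, not an unwinding of the oplax data.
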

\begin{proof}
By Corollary \ref{oplaxst}, we already know that the functor $\textrm{st}:\mb{Icon} \rightarrow \mb{Gray}_{icon}$ is $\mb{Cat}$-oplax monoidal.  The structure maps for the $\mb{Cat}$-lax monoidal structure on $i$ are the universal cubical functors $c_{AB}$, and we know these are internal equivalences in $\mb{Icon}$ by Lemma \ref{universalboobieq}.  Unfortunately, the unit and counit for the adjunction $\textrm{st} \dashv i$ are not both internal equivalences:  the counit is the 2-functor $e:\textrm{st} \, i A \rightarrow A$ which is not an internal equivalence in $\mb{Gray}_{icon}$ since it has no pseudo-inverse which is also a 2-functor.  But the formula given in Lemma \ref{oplaxtomonoidal1} for the oplax structure constraints is easily checked to be the 2-functor $\overline{\chi}$ in the proof of Proposition \ref{chiequiv}.  That theorem shows that the 2-functor $\textrm{st}$ satisfies the hypotheses of Lemma \ref{oplaxtomonoidal2}, and moreover that we can take the structure map $\textrm{st}A \otimes \textrm{st}B \rightarrow \textrm{st}(A \times B)$ to be the 2-functor $\chi$.

To give this monoidal functor the structure of a braided monoidal functor, we must provide an invertible modification $U$.  By the isomorphism in Lemma \ref{cubicalicons}, such an invertible modification will correspond to one with components as below.
\[
\xy
{\ar^{\cong} (0,0)*+{\textrm{st}X \times \textrm{st}Y}; (50,0)*+{\textrm{st}Y \times \textrm{st}X} };
{\ar^{\hat{\textrm{st}}} (50,0)*+{\textrm{st}Y \times \textrm{st}X}; (50,-20)*+{\textrm{st}(Y \times X)} };
{\ar_{\hat{\textrm{st}}} (0,0)*+{\textrm{st}X \times \textrm{st}Y}; (0,-20)*+{\textrm{st}(X \times Y)} };
{\ar_{\textrm{st}(\cong)} (0,-20)*+{\textrm{st}(X \times Y)}; (50,-20)*+{\textrm{st}(Y \times X)} };
(25,-10)*{\Downarrow}
\endxy
\]
Let $(f,g)$ be a 1-cell in $\textrm{st}X \times \textrm{st}Y$.  Then the image of this 1-cell using the top and right functors is the composite string $(1, f)(g,1)$, while the image using the left and bottom functors is the composite string $(g,1)(1, f)$.  To give an invertible 2-cell between these, we must give an invertible 2-cell between their images under $e$.  Examining $e \big( (1, f)(g,1) \big)$ and $e \big( (g,1)(1, f) \big)$, we see that there is a unique coherence isomorphism (arising from the bicategory $X \times Y$) between them, so we define that to be the component of $U$ at $(f,g)$.

There are now two axioms to check for the braided monoidal structure of this functor (see \cite{gur2} for the axioms in the fully weak case).  In both cases, the same argument shows that the axiom holds.  First, all of the 2-cells in each pasting are the identity, except for (possibly) the naturality isomorphisms for $\chi$ and the components of $U$.  For the naturality isomorphisms for $\chi$, the 1-cells involved all have identity unit constraints, hence by the definition these naturality isomorphisms are also the identity.  Now the components of $U$ are not identities, but they all evaluate to unique coherence isomorphisms under $e_{X \times Y}:\textrm{st}(X \times Y) \rightarrow X \times Y$.  We know that $e$ is a biequivalence, and in particular a pair of 2-cells in $\textrm{st}X$ are equal if and only if they are parallel and have the same image under $e_{X}$.  Thus by the above, we see that applying $e$ to the pastings of the braided monoidal functor axioms will produce a pair of parallel 2-cells which are both composites of coherence isomorphisms and therefore equal by the coherence theorem for bicategories.  Thus the pastings in question will be equal after applying $e$, hence equal beforehand as well.  This shows that $\textrm{st}$ is a braided monoidal functor.

Finally, we complete the proof that $\textrm{st}$ is symmetric monoidal.  This involves checking a single axiom (see \cite{ds} or \cite{sp}), showing that $\textrm{st}$ preserves the symmetry.  As with the braiding, it is easy to check that all of the cells involved in the pasting are identities or are represented by unique coherence cells.  Thus coherence for bicategories ensure that this axiom holds, and so $\textrm{st}$ is symmetric monoidal.
\end{proof}

\begin{remark}
At this point, one could go on to show that $\textrm{st}:\mb{Bicat} \rightarrow \mb{Gray}$ is a symmetric monoidal functor between symmetric monoidal tricategories.  The underlying functor of tricategories is discussed in \cite{gthesis, gbook}, and we have established the symmetric monoidal structure above.  All that remains is to interpret the above proof in the context of monoidal tricategories, and then to insert coherence cells to take care of the proliferation of units that would occur when changing the invertible icons used here to pseudonatural transformations with identity components.
\end{remark}

\end{document}